%
%
%
%
%
%
%
\documentclass[%
 aip,
 amsmath,amssymb,
reprint,%
]{revtex4-1}
\usepackage{graphicx}
\usepackage{dcolumn}
\usepackage{bm}
\usepackage{amsthm}
\usepackage[utf8]{inputenc}
\usepackage[T1]{fontenc}
\usepackage{mathptmx}
\usepackage{etoolbox}
\usepackage{bbm}
\usepackage{xcolor}
\usepackage[caption=false]{subfig}

\newtheorem{theorem}{Theorem}

\newtheorem{prop}{Proposition}

\makeatletter
\def\@email#1#2{%
 \endgroup
 \patchcmd{\titleblock@produce}
  {\frontmatter@RRAPformat}
  {\frontmatter@RRAPformat{\produce@RRAP{*#1\href{mailto:#2}{#2}}}\frontmatter@RRAPformat}
  {}{}
}%
\makeatother
\begin{document}

\preprint{AIP/123-QED}

\title[Rate and Noise-Induced Tipping Working in Concert]{Rate and Noise-Induced Tipping Working in Concert}
\author{Katherine Slyman}%
 \email{kslyman@ad.unc.edu.}
\affiliation{ 
Department of Mathematics, University of North Carolina at Chapel Hill, Chapel Hill, North Carolina 27517
}%
\author{Christopher K. Jones}
\affiliation{Renaissance Computing Institute, University of North Carolina at Chapel Hill, Chapel Hill, North Carolina 27517}

\date{\today}

\begin{abstract}
Rate-induced tipping occurs when a ramp parameter changes rapidly enough to cause the system to tip between co-existing, attracting states. We show that the addition of noise to the system can cause it to tip well below the critical rate at which rate-induced tipping would occur. Moreover it does so with significantly increased probability over the noise acting alone. We achieve this by finding a global minimizer in a canonical problem of the Freidlin-Wentzell action functional of large deviation theory that represents the most probable path for tipping. This is realized as a heteroclinic connection for the Euler-Lagrange system associated with the Freidlin-Wentzell action and we find it exists for all rates less than or equal to the critical rate. Its role as most probable path is corroborated by direct Monte Carlo simulations.
\end{abstract}

\maketitle

\begin{quotation}
The IPCC \cite{collins_m_m_sutherland_l_bouwer_s-m_cheong_t_frolicher_h_jacot_des_combes_m_koll_roxy_i_losada_k_mcinnes_b_ratter_e_rivera-arriaga_rd_susanto_d_swingedouw_and_l_tibig_ocean_2022} defines a tipping point as "a level of change in system properties beyond which a system reorganises, often in a nonlinear manner, and does not return to the initial state even if the drivers of the change are abated." For climate systems, tipping points refer to a critical thresholds when global or regional climate switch stable states \cite{collins_m_m_sutherland_l_bouwer_s-m_cheong_t_frolicher_h_jacot_des_combes_m_koll_roxy_i_losada_k_mcinnes_b_ratter_e_rivera-arriaga_rd_susanto_d_swingedouw_and_l_tibig_ocean_2022}.

Climate change is a rate-induced tipping problem. It is also a noisy system. These two components are true for many relevant climate subsystems. Since conceptual models of these climate systems contain multiple mechanisms that can induce tipping, there is a clear need for mathematical approaches which synthesize techniques from these areas of research.

To begin this analysis, we look at this question in its
most simple form: in the context of a canonical problem.
We study a system with a ramp parameter and impose additive noise on the dynamics to study to what extent the ramp parameter and noise interact to facilitate tipping. We approach the problem using a dynamical systems framework and prove the existence of a heteroclinic orbit between a stable base state and threshold boundary.

We find this heteroclinic orbit corresponds to the most probable path between these points. For rate values less than some critical rate, a ramp parameter alone does not allow tipping. The addition of noise to the system causes tipping well below the critical rate needed for rate-induced tipping to occur. However, noise alone acting on the system induces tipping but only after significantly longer time. Therefore a ramp parameter and noise conspire to cause tipping with increased probability over either acting alone.
\end{quotation}

\section{\label{sec:level1}Introduction}

There are three main mechanisms for tipping in dynamical systems: bifurcation-induced, rate-induced, and noise-induced \cite{ashwin_tipping_2012}. This work focuses on when there is a parameter shift (R-tipping) and the addition of random fluctuations (N-tipping), the schematics of which are shown in Figure \ref{FIG:schematic}. The aim is to assess the extent rate and noise-induced tipping work together to facilitate tipping in cases where neither readily tip on their own.

We consider a canonical one-dimensional system with a ramp parameter and impose additive noise on its dynamics. We find the addition of noise to the system causes the system to tip for all $r$ values less than the critical rate needed for rate-induced tipping, and does so with significantly increased probability over the noise or ramp acting alone. The most probable path to tip for all $r$ values corresponds to the global minimizer of the Freidlin-Wentzell rate functional, which itself is a heteroclinic orbit. While we show these results in context of a canonical problem, the phenomenon we find is suggestive for tipping between stable base states and threshold boundaries.

Our methods are as follows. We compactify the system and derive the Euler-Lagrange equations associated with the Freidlin-Wentzell action functional \cite{freidlin_random_2012}. Using a dynamical systems framework, including tracking invariant manifolds, using the Wazewski principle, and applying shooting methods, we prove there is always an intersection of the unstable manifold of the base state and stable manifold of the threshold state. Through numerical simulations we find that this intersection is unique. The action values indicate that the heteroclinic connection through this intersection point is the global minimizer of the Freidlin-Wentzell functional. The fact that is does correspond to the most probable path at the appropriate noise levels is shown from Monte-Carlo simulations.

As we consider nonzero rates within the ramp parameter, the ramp is a nontrivial component of the system. Consequently, this means that the additive noise should be of small levels, as otherwise the noise effects would come after the ramp finishes, and we focus on the interplay of these phenomena. A drawback of Freidlin-Wentzell theory is that it necessitates vanishingly small noise \cite{freidlin_random_2012}. In focusing on a small but not vanishingly small noise regime, the transient behavior of the underlying deterministic system will play an important role. We find Freidlin-Wentzell theory still holds in regard to the dynamical structure for small noise strengths, namely the heteroclinic connection is the most probable path, but more discussion is needed when considering the expected time to tip. The extension from vanishingly small to small noise levels is relevant for several applications of interest, especially in environmental, social or biological contexts. 

The addition of noise, regardless of size, will always result in tipping of the system in infinite time. However, if we consider a finite time horizon, the addition of noise will cause the system to tip with a certain probability. The probability of tipping is dependent on both the noise strength and the speed of the ramp parameter, $r$, where the time horizon is chosen long enough to ensure the ramp function completes its transition. The size of small noise will change depending on the value of $r$ being considered. Noise strengths are chosen so that the probability of tipping is less than $21\%$. 

\begin{figure}[ht]
    \centering
    \includegraphics[scale=.35]{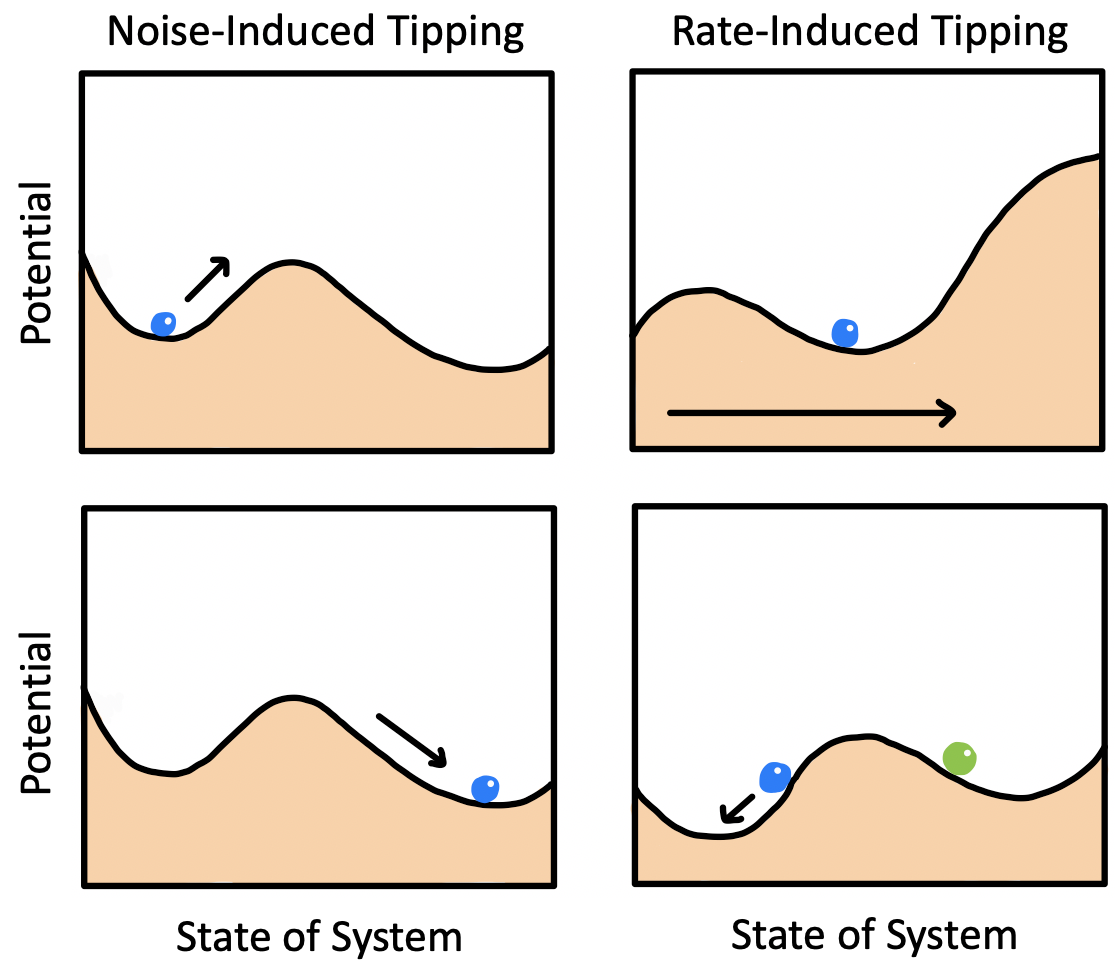}
    \caption{Schematics for noise and rate-induced tipping in terms of a potential function and initializing with a particle at a minimum. N-tipping occurs when a noisy fluctuation is strong enough to push the particle out of the minimum and to some local maximum, where it can then fall to another minimum. R-tipping occurs when an external input varies too fast compared to the response rate of the system, resulting in a shift of the landscape and the deviation of a particle from its initialized stable state and the start of tracking a different stable state. This figure is inspired and recreated from \citet{van_der_bolt_understanding_2021}.
    }
    \label{FIG:schematic}
\end{figure}

Our analysis builds off the work of \citet{ashwin_parameter_2017} and \citet{ritchie_early-warning_2016}. \citet{ashwin_parameter_2017} introduced and studied the prototype model for rate-induced tipping
\begin{equation}
\begin{aligned}
    \dot{x}=(x+\lambda)^2-1,
\end{aligned}
\end{equation}
with a monotonically increasing time-dependent parameter,
\begin{equation}
\begin{aligned}
    \lambda(t)=\frac{\lambda_{max}}{2} \left(1+\tanh \left(\frac{\lambda_{max}rt}{2} \right)\right), \ \ r>0.
\end{aligned}
\end{equation}
Using a compactification \cite{wieczorek_compactification_2021}, they augment the system to an autonomous two-dimensional system containing equilibria and compact invariant sets, and in turn, the rate-induced tipping problem turns into a heteroclinic connection problem between two saddle equilibria. \citet{perryman_how_2015} finds the critical rate needed for tipping within the system is $r_c=4/3$. \citet{ritchie_early-warning_2016} then considered this canonical problem with additive noise, and found that an interplay between the noise and ramp parameter results in tipping of the system before the critical rate, $r_c$, is reached. However, they only consider $r$ values close to the critical rate. \citet{ritchie_early-warning_2016} find solutions of the variational problem determining the most likely tipping path using numerical continuation techniques. The majority of their work focuses on the most likely tipping time in the plane of two parameters: distance from tipping threshold and noise intensity.

The motivation of this work relates to climate subsystems. The Earth’s climate is changing due to steadily warming temperatures caused by rising levels of greenhouse gasses \cite{us_epa_climate_2015}. Moreover, there are parts of the Earth system that have the potential for large, abrupt, and irreversible transitions in response to this warming, and could lead to cascading effects \cite{collins_m_m_sutherland_l_bouwer_s-m_cheong_t_frolicher_h_jacot_des_combes_m_koll_roxy_i_losada_k_mcinnes_b_ratter_e_rivera-arriaga_rd_susanto_d_swingedouw_and_l_tibig_ocean_2022}. These changes can be characterized as tipping points \cite{collins_m_m_sutherland_l_bouwer_s-m_cheong_t_frolicher_h_jacot_des_combes_m_koll_roxy_i_losada_k_mcinnes_b_ratter_e_rivera-arriaga_rd_susanto_d_swingedouw_and_l_tibig_ocean_2022}. As presented in \citet{lenton_early_2011} and \citet{lenton_tipping_2008}, there are many such examples: Greenland ice sheet loss, break-off of Antarctic ice-sheets, boreal forest dieback, and permafrost loss, to name a few. Given the magnitude of the impacts of these phenomena, a comprehensive understanding of tipping phenomena is needed to predict and prevent these irreparable changes. Many conceptual models of climate systems contain multiple mechanisms that can induce tipping and there is a clear need for mathematical approaches which combine techniques from both rate-induced and noise-induced tipping.

The paper is structured as follows. Section \ref{Det} begins with the deterministic dynamics of the canonical problem. In Section \ref{stoch} we build the stochastic framework by introducing additive noise to the system. In Section \ref{Quadratic} we derive and study the most probable path equations. These equations lead to a theorem about the existence of a heteroclinic orbit. Lastly, in Section \ref{comps}, we perform a numerical investigation of this problem that includes the finding the heteroclinic connections, path actions, and probability of tipping for different values of the rate and noise strength. We finish with a discussion and concluding remarks.

\section{The Deterministic Dynamics and Rate-Induced Tipping}
\label{Det}
Rate-induced tipping is where a sufficiently quick change to a parameter of a system may cause the system to move away from one attractor to another \cite{ashwin_tipping_2012}. We consider
\begin{equation}
\begin{aligned}
    \dot{x}=(x+y)^2-1,
\end{aligned}
\label{EQ:xdotOG}
\end{equation}
where $^\cdot =\frac{d}{dt}$, and a monotonically increasing time-dependent parameter, as proposed by  \citet{ashwin_parameter_2017},
\begin{equation}
\begin{aligned}
    y(t)=\frac{3}{2} \left(1+\tanh \left(\frac{3rt}{2} \right)\right), \ \ r>0.
\end{aligned}
\label{EQ:ramp} 
\end{equation}
Reformulating the nonautonomous system in \eqref{EQ:xdotOG} into the compactified system using the ramp function, \eqref{EQ:ramp}, itself as the coordinate transformation, maps the real line onto the finite $y-$interval $(0,3)$. The $y$-interval is closed by including $y=0,3$ which come from the limits of \eqref{EQ:ramp} at $\pm$infinity. This leads to the autonomous compactified two-dimensional system
\begin{equation}
\begin{aligned}
    \dot{x}&=(x+y)^2-1, \\
    \dot{y}&=ry(3-y).
\end{aligned}
\label{EQ:2DCompactifyFinal} 
\end{equation}
\\
The system given in \eqref{EQ:2DCompactifyFinal} has four fixed points. We focus on the saddle equilibria $(-1,0)$ and $(-2,3)$.  At a critical $r$, which we denote $r_c$, there is a heteroclinic connection between the two saddle points. \citet{perryman_how_2015} found that $r_c=4/3$ and the connecting orbit is the line given by $x=-\frac{y}{3}-1$. However, for $r<r_c$, the system end-point tracks the saddle equilibrium initialized at $(-1,0)$ to $(-4,3)$ and when $r>r_c$, the system tips to infinity. In Figure \ref{fig:nonautpathscomp}, we show trajectories for different values of $r$ for the system given in \eqref{EQ:2DCompactifyFinal}, demonstrating solution behaviors when initializing at the saddle $(-1,0)$. 

\begin{figure}[ht]
    \centering
    \includegraphics[scale=.35]{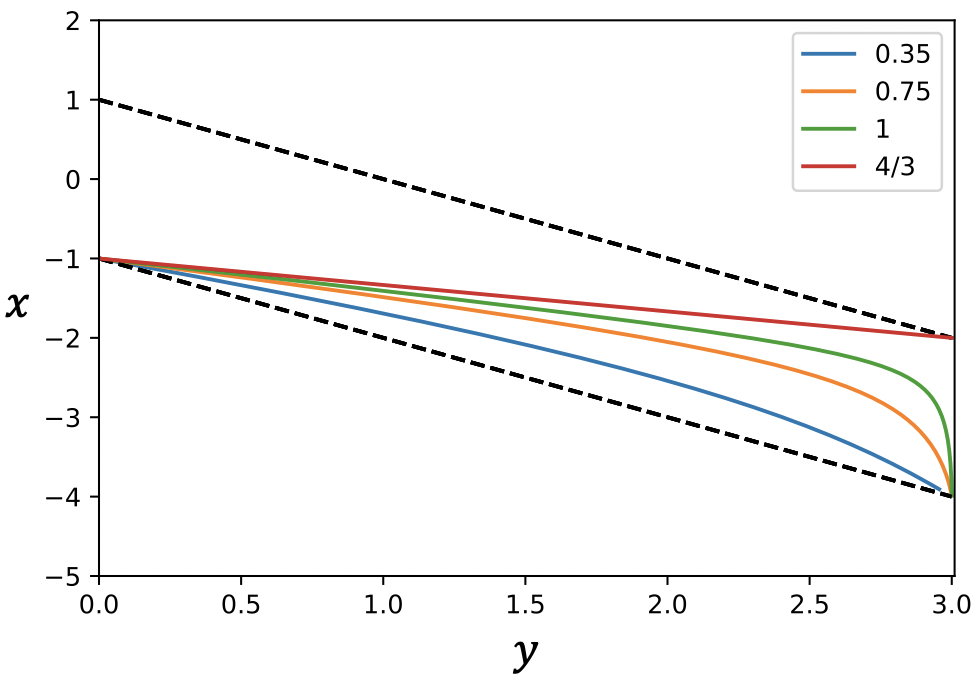}
    \caption{Solutions of the compactified system given by \eqref{EQ:2DCompactifyFinal}. The blacked dash curves track the fixed points $x=1-y$ and $x=-1-y$ in the frozen system over time. The colored trajectories are solution curves initialized at $x=-1,y=2.80729 \times 10^{-13}$ for different values of $r$. Solution curves with $r<4/3$ do not tip, whereas the solution curves with $r\geq 4/3$ tip. There is a heteroclinic connection between the two saddle equilibria at $r=4/3$.}
    \label{fig:nonautpathscomp}
\end{figure}

\section{Building the Stochastic Framework} \label{stoch}
For the remainder of this work, we want to consider the effects of additive noise on the dynamics of $x$ in \eqref{EQ:2DCompactifyFinal}. However, for Freidlin-Wentzell theory, we have to consider noise on both the dynamics of $x$ and $y$ and take the limit as noise goes to zero in the $y$ component. The stochastic version of the canonical problem is of the form
\begin{equation}
\begin{aligned}
    dx&=f(x,y)dt+\sigma_1 dW_1 =((x+y)^2-1)dt+\sigma_1 dW_1, \\
    dy&=g(x,y)dt+\sigma_2 dW_1=(ry(3-y))dt+\sigma_2 dW_2.
\end{aligned}
\label{EQ:generalsde}
\end{equation}
Speaking generally of this form, $x,y$ are stochastic processes parameterized by time, $f,g$ are the deterministic pieces of the system often referred to as the drift, $W_1,W_2$ are standard Wiener processes, and $\sigma_1,\sigma_2$ denote the noise strength and often referred to as the diffusion coefficient.

With the addition of noise to the system, we will have, with probability equal to one, tipping in the system between the two saddle equilibria. We want to find the most probable path to tip between these two points. The tool we use to study these transitions is the Freidlin-Wentzell theory of large deviations. This framework is fully presented in Freidlin and Wentzel's monograph \citep{freidlin_random_2012}, Forgoston and Moore's review article \cite{forgoston_primer_2018} and for gradient systems in Berglund's review article \citep{berglund_kramers_2013}.

As presented in \citet{freidlin_random_2012}, the most probable path between two points $(x_0,y_0)$ and $(x_f,y_f)$ is a curve of the form $(c_1(t),c_2(t))$ that minimizes the Freidlin-Wentzell functional which is given by
\begin{equation}
\begin{aligned}
    I[c_1,c_2]=\int_{t_0}^{t_f} \left(\frac{(\dot{c_1}-f)^2}{\sigma_1^2}+\frac{(\dot{c_2}-g)^2}{\sigma_2^2} \right)dt,
\end{aligned}
\label{EQ:FWfunc}
\end{equation}
where $(c_1(t_0),c_2(t_0))=(x_0,y_0)$ and $(c_1(t_f),c_2(t_f))=(x_f,y_f)$. $I[c_1,c_2]$ is nonnegative and only vanishes if and only if both $\dot{c_1}=f$ and $\dot{c_2}=g$ are solutions to the associated deterministic system. This functional represents the cost of straying from the deterministic dynamics. Minimizing this functional leads to the Euler-Lagrange equations, given by
\begin{equation}
\begin{aligned}
\ddot{c_1}&=f_y \dot{c_2}+ff_x+\frac{\sigma_1^2}{\sigma_2^2}(gg_x-\dot{c_2}g_x) ,  \\
\ddot{c_2}&=g_x \dot{c_1}+gg_y+\frac{\sigma_2^2}{\sigma_1^2}(ff_y-\dot{c_1}f_y) 
\end{aligned}
\label{EQ:finalEL}
\end{equation}
which are a condition critical points, consequently minimizers, of the Freidlin-Wentzell functional must satisfy. These conditions are necessary, but not sufficient for minimizers \cite{freidlin_random_2012}. We will use these conditions to derive the most probable path equations in Section \ref{Quadratic}.

\section{A Dynamical Systems Perspective on the  Canonical Problem}
\label{Quadratic}
Using the Euler-Lagrange equations given by \eqref{EQ:finalEL}, we use a Legendre transform \cite{arnold_mathematical_1997} to create a degree four Hamiltonian system of the form
\begin{equation}
\begin{aligned}
\dot{x}&=f+\sigma_1^2p, \\
\dot{p}&=-f_xp, \\
\dot{y}&=g+\sigma_2^2 q, \\
\dot{q}&=-g_yq.
\end{aligned}
\label{EQ:4D}
\end{equation}
The Hamiltonian function itself is
\begin{equation}H(x,p,y,q)=fp+gq+\frac{\sigma_1^2}{2}p^2+\frac{\sigma_2^2}{2}q^2.
\end{equation}
As mentioned earlier, we want to only consider noise on the dynamics of $x$, as $y$ is a time parameterization, and thus we send $\sigma_2$ to zero. It follows that $\dot{x},\dot{p},\dot{y}$ are all independent of $q$ and we are able to project onto our equations into $x,p,y$ space. Using this independence of $q$ and substituting $f$ and $g$ as they are defined in \eqref{EQ:generalsde} results in \eqref{EQ:4D} becoming
\begin{equation}
\begin{aligned}
\dot{x}&=(x+y)^2-1+\sigma_1^2p, \\
\dot{p}&=-2(x+y)p, \\
\dot{y}&=ry(3-y).
\end{aligned}
\label{EQ:MPPequations}
\end{equation}

In addition, notice that $p=0$ is invariant and carries the determinisitic flow given by \eqref{EQ:2DCompactifyFinal}. These equations in \eqref{EQ:MPPequations} are the most probable path equations. Throughout this work, $x$ is the original state variable, $y$ is a time reparameterization, and $p$ is the extra variable representing the work a trajectory has to do against the vector field. 

We note that alternatively we could have used the Freidlin-Wentzell functional on the nonautonomous system \eqref{EQ:xdotOG} to derive the Euler-Lagrange equations, use a Legendre transform to create a degree two Hamiltonian system, and finish by compactifying the system. The compactification process and the Euler-Lagrange and Legendre transform procedures commute, and we would have the same resulting equations as shown in \eqref{EQ:MPPequations}. This alternative method is useful when we perform numerical experiments in Section \ref{MCSsec}. 

Performing a phase portrait analysis on \eqref{EQ:MPPequations}, we have six equilibria: three on $y=0$ and three on $y=3$. We are interested in the heteroclinic connection between the saddle points $(-1,0,0)$ and $(-2,0,3)$, as these correspond to the saddles $(-1,0)$ and $(-2,3)$ in our two-dimensional phase space. For notation purposes we refer to $(-1,0,0)$ as $s_1$ and $(-2,0,3)$ as $s_2$. A quick check of the eigenvalues of \eqref{EQ:MPPequations} linearized at $s_1$ show $s_1$ has a 1D stable manifold and a 2D unstable manifold. Similar methods show $s_2$ has a 1D unstable manifold and a 2D stable manifold. We denote unstable and stable manifolds of a point $p$ by $W^u(p)$ and $W^s(p)$ respectively. Using this notation, the desired heteroclinic will lie on $W^u(s_1)$ and also on $W^s(s_2)$. See Figure \ref{FIG:3Dphasespace} for what the phase space looks like on $y=0$. We note that asymptotically the phase space dynamics are identical on $y=3$. 
\begin{figure}[ht]
\centering
    \includegraphics[width=.7\linewidth]{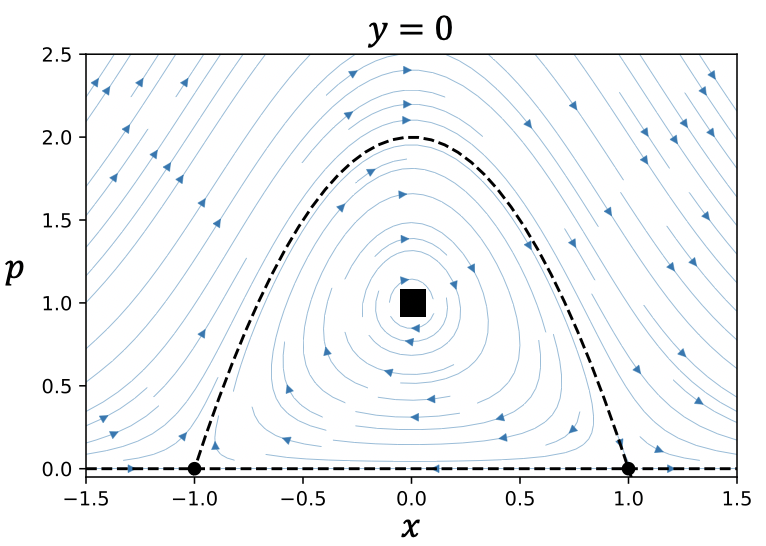}
    \caption{The phase space for \eqref{EQ:MPPequations} on the plane $y=0$. We have two saddles (black circles) and one center (black square). The black dashed lines represent where $H=0$. The blue arrows show the direction of the vector field. The phase space is asymptotically identical on the plane $y=3$.}
    \label{FIG:3Dphasespace}
\end{figure}

Using the Hamiltonian structure in the invariant planes $y=0$ and $y=3$ creates two possible tipping paths between the two saddles of interest. The first possible path is to tip from $s_1$ to $(1,0,0)$ in $y=0$ and then end-point track from $(1,0,0)$ to $s_2$ in $p=0$. The second possible path is to end-point track in $p=0$ from $s_1$ to $(-4,3,0)$ and then tip to $s_2$ in $y=3$. However, as we will see in Section 5.3, these paths have a high action value and have essentially an infinite time until tipping occurs. 

We claim there is always a third heteroclinic connection that is the most probable path and is the path of least action. We first show the existence of a heteroclinic orbit between the two saddles $s_1$ and $s_2$ for all $r \leq r_c$ by showing $W^u(s_1)$ is continuous on the plane $y=-x$ for $y \leq \frac{3}{2}$ for $r \leq r_c$, and that $W^u(s_1)$ and $W^s(s_2)$ are symmetric.
\begin{figure*}
\centering
    \includegraphics[scale=.4]{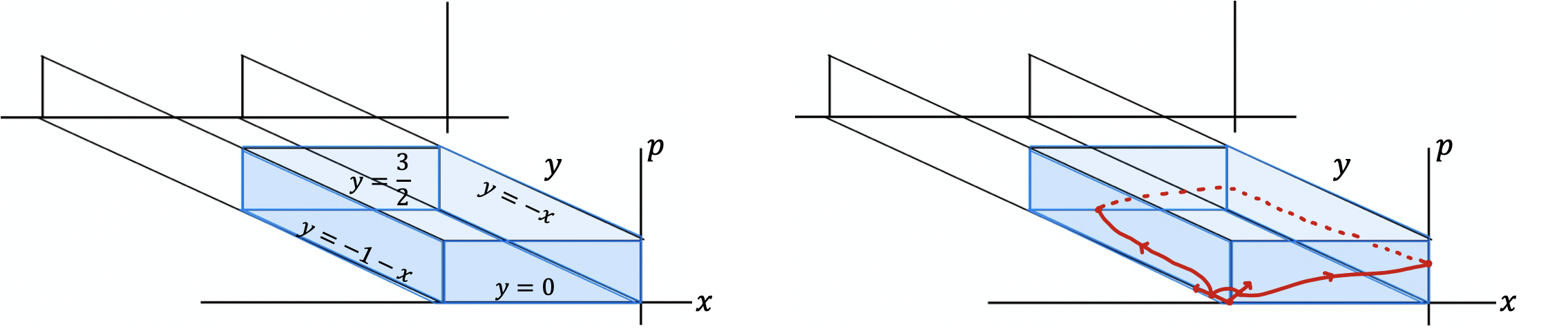}
    \caption{The boundary of the Wazewski set is in blue, and extends infinitely in the positive $p$ direction. It is the space bounded by $y=-1-x, y=0, y=-x, y=\frac{3}{2}$, and $p=0$. Taking a quarter circle of radius $\epsilon$ about $s_1$ intersected with the piece of $W^u(s_1)$ lying in $W$, and applying map $K$, results in the dotted red curve.}
    \label{FIG:wset}
\end{figure*}
\begin{prop}
The primary intersection of $W^u(s_1)$ with the plane $y=-x$ is continuous on the plane $y=-x$ \\
for $y<\frac{3}{2}$.
\end{prop}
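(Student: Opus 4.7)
The plan is to apply the Wazewski principle to the set $W$ shown in Figure~\ref{FIG:wset} and identify the primary intersection of $W^u(s_1)$ with $\{y=-x\}$ as the image of a continuous exit map.

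First I would verify that $W$ is a Wazewski set by reading off the vector field of \eqref{EQ:MPPequations} on each bounding face. The faces $y=0$ and $p=0$ are invariant, since $\dot{y}=ry(3-y)$ and $\dot{p}=-2(x+y)p$ vanish there. On the face $y=-1-x$ one has $\frac{d(x+y)}{dt}=\sigma_1^2 p+ry(3-y)\ge 0$ when $x+y=-1$, so this is an entrance face. On $y=3/2$ the equation $\dot{y}=9r/4>0$ makes it an exit face. On $y=-x$ one computes $\frac{d(x+y)}{dt}=-1+\sigma_1^2 p+ry(3-y)$, which is positive precisely on the exit portion of the face. The resulting exit set is relatively open in $\partial W$, so $W$ is a Wazewski set.

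Next I would take a small quarter-circle of radius $\epsilon$ in $W^u(s_1)\cap W$ about $s_1$, parametrized by an angle $\theta\in[0,\pi/2]$ interpolating between the two unstable eigendirections at $s_1$. Linearizing \eqref{EQ:MPPequations} at $s_1$ gives eigenvalues $-2,\,2,\,3r$, and the two unstable eigenvectors lie respectively in the invariant planes $y=0$ and $p=0$; hence the two endpoints of the arc correspond to initial conditions that the flow keeps inside these invariant planes. For interior $\theta$, the monotonicity $\dot{p}=-2(x+y)p\ge 0$ on $W$ together with the bound $\dot{x}\ge \sigma_1^2 p-1$ forces $p$ and $x+y$ to grow until the orbit crosses either $y=-x$ or $y=3/2$, so every trajectory on the arc exits $W$ in finite time. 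The Wazewski principle then yields a continuous exit map $K:\text{arc}\to\partial W$.

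To conclude I would identify the two endpoint exits. At the arc endpoint tangent to $y=0$, the trajectory stays in the invariant plane $y=0$ and follows the 2D Hamiltonian dynamics of Figure~\ref{FIG:3Dphasespace}, exiting $W$ through $y=-x$ at $y=0$. At the arc endpoint tangent to $p=0$, the trajectory stays in $p=0$ and follows the deterministic flow of \eqref{EQ:2DCompactifyFinal}; for $r\le r_c$ this trajectory end-point tracks from $s_1$ to $(-4,0,3)$ and so exits $W$ through $y=3/2$ without ever crossing $y=-x$. By continuity of $K$ and an intermediate value argument applied to the $y$-coordinate of the exit, the exit $y$-value sweeps continuously over $[0,3/2]$ as $\theta$ varies, and any exit point with $y<3/2$ must lie on $y=-x$. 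The corresponding piece of $K$'s image is the primary intersection $W^u(s_1)\cap\{y=-x\}$, proving continuity on that plane for $y<3/2$. The main obstacle I anticipate is the escape argument itself: $W$ is unbounded in the $p$-direction and $\dot{p}\ge 0$ on it, so one must rule out trajectories that drift to $p=+\infty$ without hitting an exit face. The combination $\dot{p}\ge 0$ and $\dot{x}\ge \sigma_1^2 p-1$ should force a crossing of $y=-x$ eventually, but obtaining a uniform bound on the exit time along the entire arc needs quantitative care.
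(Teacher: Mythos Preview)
Your approach is the same as the paper's---apply the Wazewski principle to the set $W$ of Figure~\ref{FIG:wset} and read off continuity of the exit map---but there is a genuine gap in your verification that $W$ is a Wazewski set, and this gap is precisely the point the paper spends most of the proof on.

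On the face $y=-x$ you compute $\frac{d}{dt}(x+y)=-1+\sigma_1^2 p+ry(3-y)$ and declare the region where this is positive to be the exit portion. You then say ``the resulting exit set is relatively open in $\partial W$, so $W$ is a Wazewski set.'' This is backwards: the Wazewski hypothesis (in the form the paper uses) is that the \emph{immediate} exit set $W^-$ be \emph{closed} relative to the eventual exit set $W^0$. The strict-inequality region you identify is open, not closed, and you have not said what happens on the tangency curve $p=\frac{1}{\sigma_1^2}\bigl(1-ry(3-y)\bigr)$ where $\frac{d}{dt}(x+y)=0$. If points on that curve were internally tangent (entering $W$), then $W^-$ would fail to be closed in $W^0$ and the exit map need not be continuous. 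The paper resolves this by a second-derivative test: at any point $z$ on the tangency curve one has $\frac{dx}{dy}=-1$ and $\frac{d^2x}{dy^2}=\frac{3-2y}{y(3-y)}>0$ for $y<3/2$, so the trajectory is concave up relative to the face and therefore leaves $W$. That puts the tangency curve in $W^-$, which is what makes $W^-$ closed. You need this computation, or an equivalent one.

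Two smaller points. First, everything from the quarter-circle arc onward---tracking the two endpoints into the invariant planes $y=0$ and $p=0$, invoking $r\le r_c$ so that the $p=0$ trajectory reaches $y=3/2$, and the intermediate-value step---is the content of the paper's Proposition~2, not Proposition~1. Proposition~1 is only the continuity of the exit map on $\{y=-x\}$ for $y<3/2$; the restriction $r\le r_c$ is not needed here. Second, the finite-time-exit concern you flag (the set is unbounded in $p$) is real and the paper does not address it either; it is implicit in asserting that the arc lies in $W^0$. Your inequality $\dot x\ge \sigma_1^2 p-1$ together with $\dot p\ge 0$ on $W$ is the right mechanism: once $p$ exceeds $1/\sigma_1^2$ the quantity $x+y$ increases monotonically with rate bounded below, forcing a crossing of $y=-x$ in finite time.
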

\begin{proof}
The Wazewski Principle \cite{srzednicki_wazewski_2004} states the following: 
\

\

\noindent \textit{Let $W^-$ be the immediate exit set of $W$ and let $W^0$ be the eventual exit set of $W$. If $W^-$ is closed relative to $W^0$, then $W$ is a Wazewski set and the map $K:W^0 \rightarrow W^-$, that takes each point to the first where it exits $W$ is continuous.}
\

\

We define the primary intersection of $W^u(s_1)$ with the plane $y=-x$ to be the first crossing of this plane from trajectories initialized in the unstable subspace of $s_1$ coming from $-\infty$.
For the system given in \eqref{EQ:MPPequations}, we say the Wazewski set, $W$, is the space bounded by the following planes: $y=-1-x$, $y=-x$, $y=0$, $y=3/2$, and $p=0$. Based on flow of the vector field, the following are true about the boundaries of $W$: $y=-1-x$ is an entrance set, $y=0$ and $p=0$ are neither entrance nor exit sets, as they are invariant planes, and $y=3/2$ is an exit set. On $y=-x$, below the curve $p=\frac{1}{\sigma_1^2}(1-ry(3-y))$ is an entrance set and above it, an exit set. Refer to Figure \ref{FIG:wset} for a visual of $W$.

We have to determine what happens on the curve $p=\frac{1}{\sigma_1^2}(1-ry(3-y))$ itself, which is the boundary between an entrance set and an immediate exit set.
Consider $x$ and $p$ as functions of $y$. Looking at the first and second derivatives at the point
$z=(-y,\frac{1}{\sigma_1^2}(1-ry(3-y)),y)$, representing any point on this curve, we have
\begin{equation}
\begin{aligned}
\frac{dx}{dy}\Big\rvert_z&=-1, \\
\\
\frac{d^2 x}{dy^2}\Big\rvert_z&=\frac{3-2y}{y(3-y)} >0 \text{ for } y<3/2, \\
\\
\frac{dp}{dy}\Big\rvert_z&=0,\\
\\
\frac{d^2 p}{dy^2}\Big\rvert_z&=0.
\end{aligned}
\label{EQ:wsetarg}
\end{equation}
We see in \eqref{EQ:wsetarg} that $\frac{dx}{dy}=-1$ and $\frac{d^2 x}{dy^2}>0$. By the second derivative test, we know a trajectory would be concave up at this point, forcing any points to leave and consequently, not enter $W$. Therefore we have shown that the boundary of the immediate exit set is contained in the immediate exit set. We conclude the following about the immediate exit set and eventual exit set of $W$:
\begin{align*}
W^-&=\{(x,p,y) \mid y=3/2, y=-x \text{ for } p \geq \frac{1}{\sigma_1^2}(1-ry(3-y))\}\\
W^0&=\{(x,p,y) \mid W \setminus \{y=0, y=-1-x, p=0, (-1,0,0), \\
& \hspace{5mm} (0,1,0)\}\}.
\end{align*}
The boundary of the immediate exit set is in the immediate exit set, and it easily follows that $W^-$ is closed relative to $W^0$. Therefore $W$ is a Wazewski set and the map $K:W^0 \rightarrow W^-$, is continuous for $y<\frac{3}{2}$. This implies that $W^u(s_1)$ intersected with the plane $y=-x$ is continuous for $y<\frac{3}{2}$. 
\end{proof}
\begin{prop}
$W^u(s_1)$ intersected with $W^-$ crosses the plane $y=\frac{3}{2}$ for $r \leq r_c$.
\end{prop}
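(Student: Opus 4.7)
The plan is to combine Proposition~1 with a shooting argument along a small arc in $W^u(s_1)$ and apply the intermediate value theorem to the $y$-coordinate of the Wazewski exit map $K$. First, take a small arc $\gamma(\theta)$, $\theta\in[0,1]$, in $W^u(s_1)$ near $s_1$ whose endpoints lie on the invariant planes $\{y=0\}$ (at $\theta=0$) and $\{p=0\}$ (at $\theta=1$); since $W^u(s_1)$ is 2D and each of these invariant planes meets $W^u(s_1)$ in a 1D unstable curve through $s_1$, such an arc exists in the sector $\{p\ge 0,\ y\ge 0\}$. Each $\theta\in[0,1]$ labels a unique forward orbit $\phi_\theta$ of \eqref{EQ:MPPequations}.

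Next, analyse the two boundary orbits. The orbit $\phi_1$ lives in the invariant plane $\{p=0\}$ and obeys the deterministic system \eqref{EQ:2DCompactifyFinal}. For $r<r_c$ it asymptotes to the stable quasi-static branch $x=-1-y$, which lies strictly inside $\{x<-y\}$, so $\phi_1$ exits $W$ through the interior of the top face $\{y=3/2\}$. For $r=r_c$ it is the heteroclinic $x=-y/3-1$ of \citet{perryman_how_2015}, which meets the edge $\{y=-x\}\cap\{y=3/2\}$ exactly at the corner $(-3/2,0,3/2)$. In either case $\phi_1$ exits $W$ with $y$-coordinate $3/2$. The orbit $\phi_0$ lives in the invariant plane $\{y=0\}$, where \eqref{EQ:MPPequations} restricts to a Hamiltonian subsystem; the $H=0$ separatrix from $s_1$ runs to $(1,0,0)$ and crosses $x=0$ at $p_0=2/\sigma_1^2$, and since $p_0>1/\sigma_1^2$ this is precisely the point where $\phi_0$ exits $W$ through the $\{y=-x\}$ face at $y=0$.

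Then apply Proposition~1 on the open interval $\theta\in(0,1)$. Since $\phi_\theta(0)$ has $p>0$ and $y>0$, it lies in $W^0$, so $K(\phi_\theta(0))$ depends continuously on $\theta$, and in particular its $y$-coordinate $y^*(\theta)$ is a continuous real-valued function on $(0,1)$. Using continuous dependence on initial conditions on compact time intervals, $y^*(\theta)\to 0$ as $\theta\to 0^+$ (trajectories hug $\phi_0$ and exit the $\{y=-x\}$ face near $y=0$) and $y^*(\theta)\to 3/2$ as $\theta\to 1^-$ (trajectories hug $\phi_1$ and exit on or near the top face). The intermediate value theorem then yields $\theta^*\in(0,1)$ with $y^*(\theta^*)=3/2$, and $K(\phi_{\theta^*}(0))$ is the desired point of $W^u(s_1)\cap W^-$ in the plane $y=3/2$.

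The main obstacle is the limit $y^*(\theta)\to 3/2$ as $\theta\to 1^-$: because $W^0$ excludes $\{p=0\}$, Wazewski continuity is unavailable at $\theta=1$, and the exponential growth of $p$ driven by $\dot p=-2(x+y)p$ (with $x+y\approx-1$ along $\phi_1$) could in principle push nearby interior trajectories across $\{y=-x\}$ before $y$ reaches $3/2$. The essential point is that for $r\le r_c$ the $p=0$ orbit keeps a positive distance from $\{y=-x\}$ until it meets $\{y=3/2\}$ (strictly positive for $r<r_c$, tending to zero only at the corner as $r\uparrow r_c$); together with continuous dependence and a Gr\"onwall estimate on $p$ along the arc, this suffices to conclude that sufficiently close interior trajectories exit through the top face. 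The restriction $r\le r_c$ is exactly the regime in which this gap exists.
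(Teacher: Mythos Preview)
Your proposal is essentially the paper's own argument: take a small arc of $W^u(s_1)$ near $s_1$ joining the invariant planes $\{y=0\}$ and $\{p=0\}$, observe that the $y=0$ end is carried by the flow to the $\{y=-x\}$ face at $y=0$ while the $p=0$ end (for $r\le r_c$) is carried to the $\{y=3/2\}$ face, and conclude by continuity of the Wazewski map that the image meets $\{y=3/2\}$. If anything you are more careful than the paper at $\theta=1$, where the paper simply applies $K$ to the $p=0$ endpoint without comment even though $\{p=0\}$ is excluded from its $W^0$; your continuous-dependence/Gr\"onwall remark is exactly what is needed to close that gap, and once it shows nearby interior orbits actually exit through the top face the conclusion follows directly (the IVT framing is then superfluous).
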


\begin{proof}
Notice $y=\frac{3}{2}$ separates $W^-$ into two pieces. Take the quarter circle of radius $\epsilon$ around the fixed point $s_1$ intersected with $W^u(s_1)$ that lies in $W$, and call this curve $C_W$. Applying the map $K$ to $C_W$ results in a curve in $\mathbb{R}^3$, specifically a curve lying in $W^-$ by the definition of Wazewski map $K$. 

Since $C_W$ is a closed curve, we track where the two endpoints of $C_W$ map to under $K$. The first endpoint of $C_W$ has $y=0,p\neq 0$ and second endpoint of $C_W$ has $p=0,y\neq 0$. Take the endpoint of $C_W$ that lies in $y=0$. Since the $y=0$ plane is invariant, when we apply $K$, the trajectory must stay in this plane and eventually exit through $y=-x$ and above $p=\frac{1}{\sigma_1^2}(1-ry(3-y))$. Take the endpoint of $C_W$ that lies in $p=0$. Since the $p=0$ plane is invariant, when we apply $K$, the trajectory must stay in this plane. Since $\dot{y}>0$, this trajectory will eventually exit through $y=\frac{3}{2}$, when $r \leq r_c$. 

$W^u(s_1)$ intersected with $W^-$ actually intersects $y=\frac{3}{2}$ by the intermediate value theorem, as $K$ is a continuous map, and one endpoint of $C_W$ maps to the plane $y=-x$ in $y=0$ while the other endpoint of $C_W$ maps to the plane $y=3/2$ in $p=0$. See Figure \ref{FIG:wset} for an illustration of this shooting argument.

Therefore the intersection of $W^u(s_1)$ and the plane $y=-x$ is continuous for $y \leq \frac{3}{2}$ for $r \leq r_c$. 
\end{proof}

\begin{prop}
$W^u(s_1)$ and $W^s(s_2)$ are symmetric. 
\end{prop}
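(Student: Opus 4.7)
The plan is to exhibit an explicit involution $\Phi$ on the phase space of system \eqref{EQ:MPPequations} which, combined with time reversal $t\mapsto -t$, is a symmetry of the flow and interchanges the two saddles $s_1$ and $s_2$. Any such symmetry automatically sends $W^u(s_1)$ onto $W^s(s_2)$. Since $s_1=(-1,0,0)$ and $s_2=(-2,0,3)$ are centrally symmetric about $(-\tfrac{3}{2},0,\tfrac{3}{2})$ and the combination $x+y$ takes opposite values $\mp 1$ at these two points, I would try the candidate
\begin{equation*}
\Phi(x,p,y)=(-x-3,\;p,\;3-y)
\end{equation*}
paired with $t\mapsto -t$. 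A one-line check confirms that $\Phi$ is an involution, that $\Phi(s_1)=s_2$, and that $\Phi$ preserves each of the invariant planes $y=0$, $y=3$, and the plane $y=-x$ that was central to Propositions~1 and~2.

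Next I would verify that if $\gamma(t)=(x(t),p(t),y(t))$ solves \eqref{EQ:MPPequations}, then $\tilde\gamma(\tau):=\Phi(\gamma(-\tau))$ is also a solution. Differentiating and using the defining identities $\tilde x+\tilde y=-(x+y)$, $\tilde p=p$, and $3-\tilde y=y$ reduces the verification to one short algebraic check per component of the vector field. The $\dot x$ equation is immediate because $(x+y)^2$ is even and $p$ is fixed; the $\dot p$ equation is immediate because the sign flip on the left from $t\mapsto -t$ is cancelled by the sign flip of $x+y$ on the right; the $\dot y$ equation holds because the logistic nonlinearity $ry(3-y)$ is invariant under $y\leftrightarrow 3-y$ and the time reversal produces exactly the overall sign needed, via $\tfrac{d}{d\tau}(3-y(-\tau))=\dot y(-\tau)$.

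Finally, I would apply the verified symmetry to the invariant manifolds. If $\gamma$ lies on $W^u(s_1)$, so that $\gamma(-\infty)=s_1$, then $\tilde\gamma(+\infty)=\Phi(s_1)=s_2$, and hence $\tilde\gamma$ lies on $W^s(s_2)$; the reverse direction is symmetric, giving a bijection $W^u(s_1)\to W^s(s_2)$, which is the claimed symmetry. The only real obstacle is identifying the correct involution — once $\Phi$ is written down, every remaining step is a one-line algebraic identity and no further dynamical input is required. As a remark on what this buys us downstream, the fixed set of $\Phi$ is the line $\{x=-\tfrac{3}{2},\,y=\tfrac{3}{2},\,p\in\mathbb{R}\}$, which lies inside the plane $y=-x$; this is exactly the ingredient that will combine with the continuity and reachability established in Propositions~1 and~2 to force a genuine intersection of $W^u(s_1)$ and $W^s(s_2)$, yielding the desired heteroclinic orbit.
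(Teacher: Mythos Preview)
Your proposal is correct and is essentially identical to the paper's own proof: the paper uses the time reversal $\tau=-t$ together with the change of variables $\hat x=-x-3$, $\hat p=p$, $\hat y=3-y$ (exactly your $\Phi$) and checks that the equations return to their original form, concluding the symmetry of $W^u(s_1)$ and $W^s(s_2)$. Your write-up is slightly more explicit in spelling out the map on trajectories and the resulting bijection between the invariant manifolds, and your observation about the fixed set of $\Phi$ is a nice preview of how the symmetry feeds into Theorem~\ref{THM1}.
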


\begin{proof}

Recall our system given in \eqref{EQ:MPPequations}. Making the change of variables $\tau=-t$, we get the time reversed system given by
\begin{equation}
\begin{aligned}
x'&=-(x+y)^2+1-\sigma_1^2 p,\\
p'&=2(x+y)p, \\
y'&=ry(y-3).
\end{aligned}
\label{EQ:timereverse}
\end{equation}
We transform the variables $x,p,y$ by
\begin{equation*}
\hat{x}=-x-3, \qquad
\hat{p}=p,  \qquad 
\hat{y}=3-y.
\end{equation*}
and substitute them into the time reversed system we found in \eqref{EQ:timereverse}. The equations simplify to
\begin{equation}
\begin{aligned}
\hat{x}'&=(\hat{x}+\hat{y})^2-1+\sigma_1^2\hat{p}, \\
\hat{p}'&=-2(\hat{x}+\hat{y})\hat{p}, \\
\hat{y}'&=r\hat{y}(3-\hat{y}).
\end{aligned}
\label{EQ:symmetricsystem}
\end{equation}
We see that \eqref{EQ:symmetricsystem} is in the original form, as given in \eqref{EQ:MPPequations}, and it follows that $W^u(s_1)$ and $W^s(s_2)$ are symmetric. 

\end{proof}

\begin{theorem}
There exists a heteroclinic connection between the saddle points $s_1$ and $s_2$ that goes through the plane $y=-x$ at $y=\frac{3}{2}$ for $r \leq r_c$.
\label{THM1}
\end{theorem}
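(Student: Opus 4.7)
The plan is to combine the three preceding propositions by identifying a single point in phase space that must lie on both $W^u(s_1)$ and $W^s(s_2)$. Propositions 1 and 2 together deliver a point of $W^u(s_1)$ on the intersection of the plane $y=-x$ with $y=\frac{3}{2}$, and Proposition 3 provides an involutive symmetry that interchanges $W^u(s_1)$ and $W^s(s_2)$ while fixing exactly this point. The entire argument then reduces to observing that this fixed point lies in both manifolds.

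First I would extract the key point. Proposition 1 gives a continuous curve for $W^u(s_1) \cap \{y=-x\}$ parameterized by $y$ on $[0,\frac{3}{2})$, and Proposition 2 extends the conclusion up to and including $y=\frac{3}{2}$ whenever $r \leq r_c$. This produces a point $P = (-\frac{3}{2}, p^{\ast}, \frac{3}{2})$ lying on $W^u(s_1)$ for some $p^{\ast} \geq 0$. Next I would check that the spatial change of variables appearing in Proposition 3, namely $\hat{x}=-x-3$, $\hat{p}=p$, $\hat{y}=3-y$, fixes $P$: indeed $-(-\frac{3}{2})-3 = -\frac{3}{2}$ and $3 - \frac{3}{2} = \frac{3}{2}$, with $p$ unchanged, so if $\Phi$ denotes this involution then $\Phi(P)=P$.

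Since the combined operation of time reversal and the spatial change of variables carries $W^u(s_1)$ bijectively onto $W^s(s_2)$ by Proposition 3, applying this symmetry to $P \in W^u(s_1)$ gives $\Phi(P) = P \in W^s(s_2)$. Therefore $P$ lies in $W^u(s_1) \cap W^s(s_2)$, and the trajectory of the full flow through $P$ is a heteroclinic orbit from $s_1$ to $s_2$ that crosses the plane $y=-x$ at $y=\frac{3}{2}$, as required.

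The point requiring most care is the translation from a point-level symmetry to a statement about invariant manifolds: the spatial change of variables alone sends the original flow to its time reverse, not to itself, so it is only after pairing it with reversal of time that the unstable manifold of $s_1$ is carried onto the stable manifold of $s_2$. Once this pairing is properly set up, the proof collapses to the fixed-point computation above together with a citation of Propositions 1, 2, and 3, and I do not anticipate additional analytical difficulty beyond what has already been handled there.
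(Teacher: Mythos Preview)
Your argument is correct and follows the same strategy as the paper's proof: Propositions~1 and~2 place a point of $W^u(s_1)$ on the line $\{y=-x\}\cap\{y=\tfrac{3}{2}\}$, and the time-reversing symmetry of Proposition~3 then forces that same point to lie on $W^s(s_2)$. You in fact make explicit the step the paper leaves implicit, namely that the involution $\Phi(x,p,y)=(-x-3,\,p,\,3-y)$ fixes every point with $x=-\tfrac{3}{2}$ and $y=\tfrac{3}{2}$, which is precisely why the traces of $W^u(s_1)$ and $W^s(s_2)$ on $\{y=-x\}$ share a common point at $y=\tfrac{3}{2}$ rather than merely both reaching that height.
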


\begin{proof} 
We found that the intersection of $W^u(s_1)$ and the plane $y=-x$ was continuous for $y \leq \frac{3}{2}$ for $r \leq r_c$ using Propositions 1 and 2. The symmetry of $W^u(s_1)$ and $W^s(s_2)$, proven in Proposition 3, implies the intersection of $W^s(s_2)$ intersected with the plane $y=-x$ is continuous for $y \geq \frac{3}{2}$ for $r \leq r_c$. Therefore $W^u(s_1)$ and $W^s(s_2)$ will always intersect once in the plane $y=-x$ at $y= \frac{3}{2}$, implying a heteroclinic connection between $s_1$ and $s_2$ for $r \leq r_c$, and concluding our proof of Theorem \ref{THM1}.
\end{proof}

We have thus proven the existence of a heteroclinic connection between $s_1$ and $s_2$ for all $r\leq r_c$ and demonstrated how to find this heteroclinic using the intersection of the invariant manifolds. In the deterministic system, for $r<r_c$, we would not have tipping or a heteroclinic connection between the saddles. The presence of noise, regardless the size, allows direct tipping between these saddles within the system.

\section{Computational Methods and Numerical Results} \label{comps}
\subsection{Visualization of Invariant Manifolds and the Heteroclinic Connection}
We proved in Section 4 the existence of the intersection of $W^u(s_1)$ and $W^s(s_2)$ at $y=3/2$, giving rise to a heteroclinic connection between the two saddle points through that specific point. We numerically compute these manifolds, plot them in $y=-x$, and observe their intersection at $y=3/2$. This enables us to visualize their intersection, as well as compute the trajectory through the intersection point. The trajectory is then projected into the $xy$ plane to find the heteroclinic connection in the two-dimensional extended phase space.

The local unstable subspace of $s_1$ is spanned by the two vectors 
\begin{equation}
\left(\frac{\sigma_1^2}{4},1,0 \right)^T \text{ and } \left(\frac{-2}{3r+2},0,1 \right)^T, 
\label{EQ:vectors}
\end{equation}
which span the plane 
\begin{equation}
4(x+1)-(\sigma_1^2+4)p-\frac{12r}{3r+2}y=0.
\label{EQ:planespan}
\end{equation}
Intersecting this plane with the sphere 
\begin{equation}
(x+1)^2+p^2+y^2=(.001)^2
\label{EQ:sphere}
\end{equation} and taking points such that $y,p>0$ result in a curve of points that lie in the unstable subspace, as seen in Figure \ref{fig:disc_curve}. We discretize this curve, and use the tuples as a set of initial conditions.
\begin{figure}[ht]
    \centering
    \includegraphics[scale=.25]{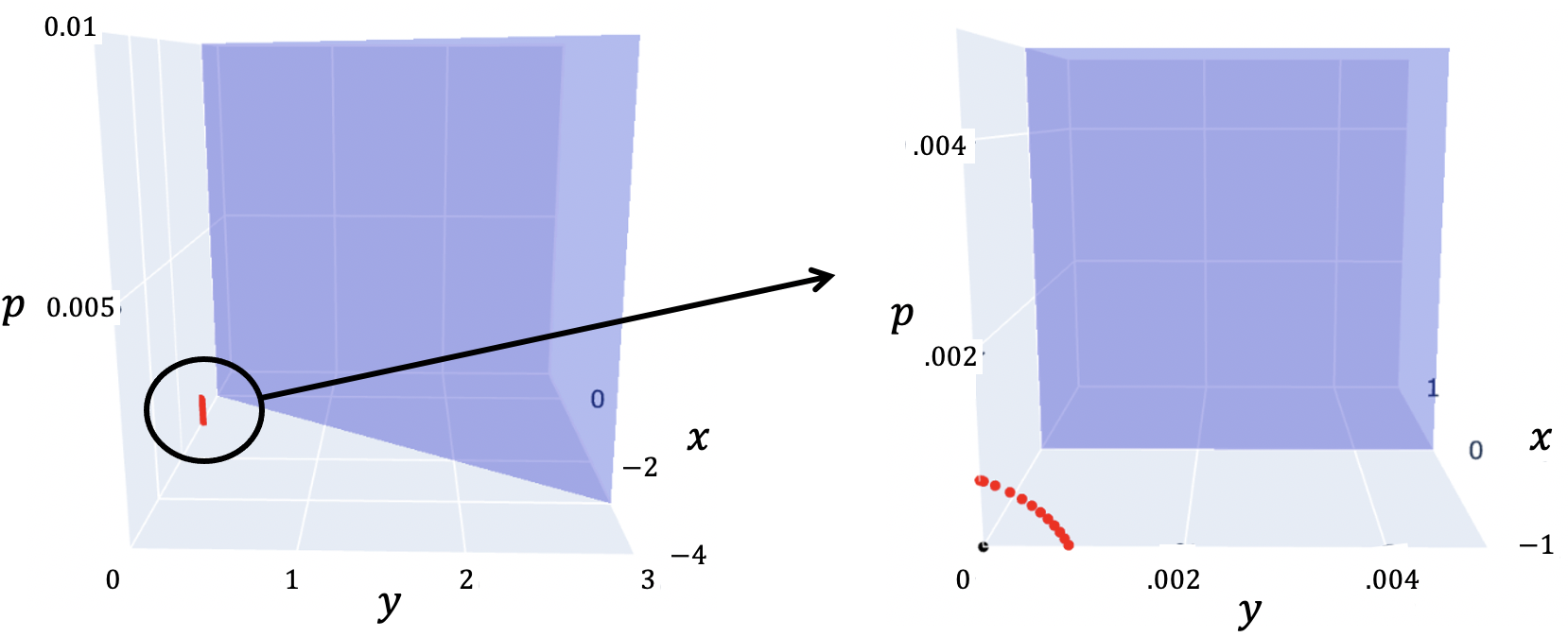}
    \caption{Parameters are set at $r=1, \sigma_1=0.15$. The red curve is the intersection of the sphere \eqref{EQ:sphere} and the plane \eqref{EQ:planespan} spanned by \eqref{EQ:vectors}.}
    \label{fig:disc_curve}
\end{figure}
We numerically run system \eqref{EQ:MPPequations} forward in time, for each initial condition, until the trajectory first hits the plane $y=-x$. Similarly you can perform this process when looking at the stable subspace of $s_2$ and running system \eqref{EQ:MPPequations} backwards in time. We find the intersection of these two curves in the plane $y=-x$. Through these simulations, we find the intersection point of these two curves in $y=-x$ is unique. Running the system both forwards and backwards in time from the intersection point supplies the full heteroclinic trajectory. Refer to Figure \ref{FIG:manifold} to see a visualization of $W^u(s_1)$ and $W^s(s_2)$ intersecting in the plane $y=-x$, as well as the trajectory through the intersection point for two different parameter pairs of $r$ and $\sigma_1$, corresponding to the heteroclinic orbit between $s_1$ and $s_2$. Projecting this heteroclinic orbit into $xy$ space is the connecting orbit between $(-1,0)$ and $(-2,3)$, and we show in the next section that this orbit is in fact the most probable path between these points.
\begin{figure}[ht]
  \centering
  \subfloat[]{\includegraphics[width=0.2\textwidth]{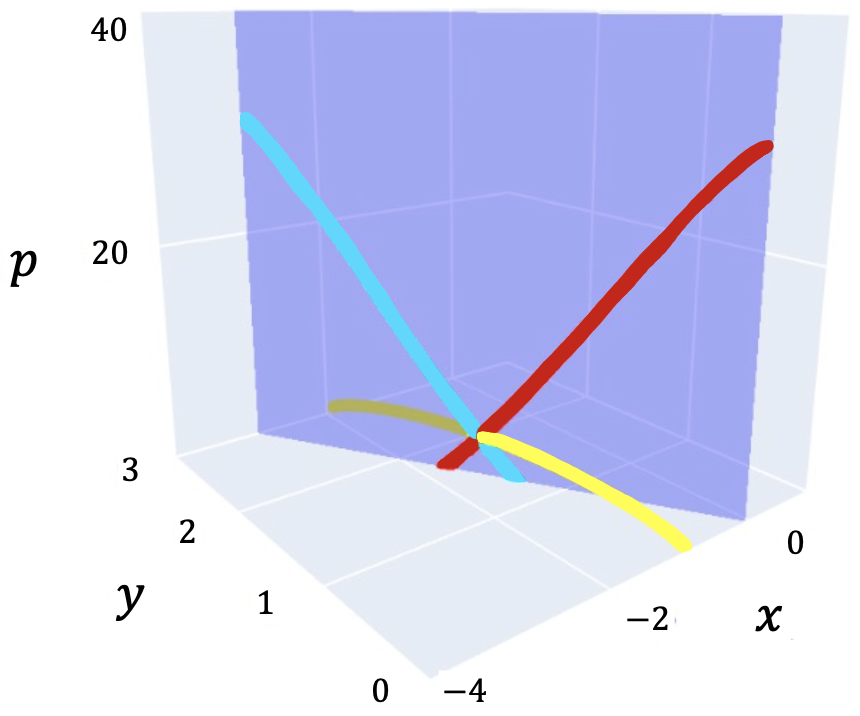}\label{fig:f1}}
  \hspace{5mm}
  \subfloat[]{\includegraphics[width=0.2\textwidth]{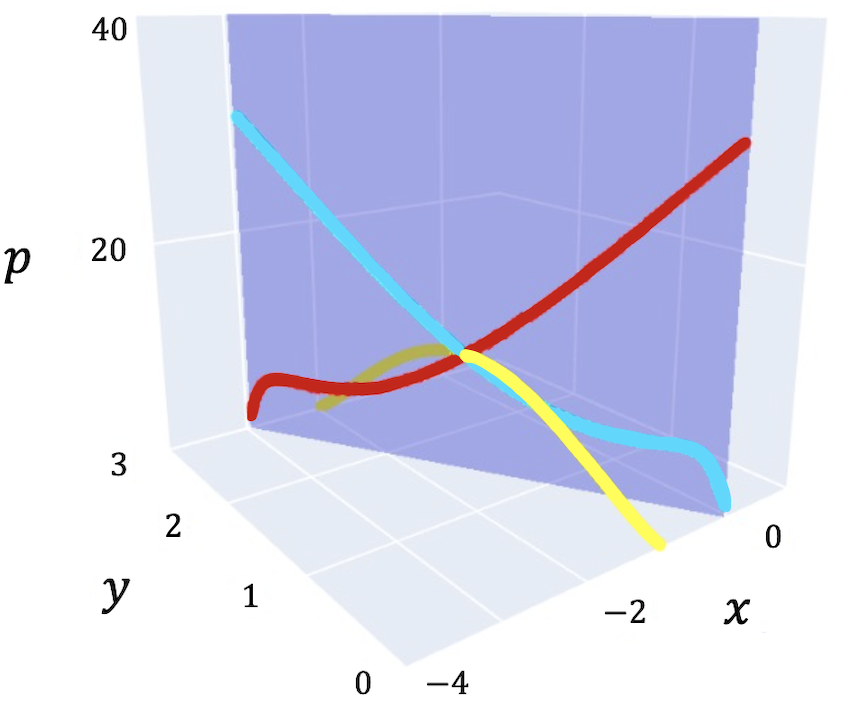}\label{fig:f2}}
  \caption{$W^u(s_1)$ (red) and $W^s(s_2)$ (cyan) in the plane $y=-x$ (purple) for $y \in (0,3)$. The trajectory through their intersection point is the heteroclinic orbit connecting $s_1$ and $s_2$ (yellow). a) Parameters are set at $r=1, \sigma_1=0.25$. b) Parameters are set at $r=.5, \sigma_1=0.25$.}
   \label{FIG:manifold}
\end{figure}

Besides the existence of the heteroclinic connection between $s_1$ and $s_2$ proven in Section \ref{Quadratic}, this first set of numerical simulations now verifies the uniqueness of the heteroclinic connection between $s_1$ and $s_2$. However, we still need to determine if this heteroclinic connection is the most probable path between these two points, implying we need to show it is the global minimizer of the Freidlin-Wentzell action functional. We perform these calculations in the next section.

\subsection{Monte Carlo Simulations and the Most Probable Path} \label{MCSsec}
We corroborate the heteroclinic connection constructed in Section \ref{Quadratic} is in fact the most probable path using Monte Carlo simulations. Recall our original problem was a one-dimensional differential equation. Consider its stochastic version, given by
\begin{equation}
\begin{aligned}
dx&=((x+y)^2-1)dt +\sigma_1 dW, \\
y(t)&=\frac{3}{2} \left (1+\tanh \left(\frac{3rt}{2}\right )\right), r>0.
\end{aligned}   
\label{EQ:MCs}
\end{equation}
As we said in Section \ref{Quadratic}, the order of compactification process and the Euler-Lagrange and Legendre procedures commute, and therefore we use \eqref{EQ:MCs} for running simulations as it is computationally less expensive.

We numerically approximate the solutions of \eqref{EQ:MCs} by using the Euler-Maruyama method to create a discretized Markov process \cite{higham_algorithmic_2001} over the time interval $[0,30].$ To apply the Euler-Maruyama method, we partition the time interval into sub-intervals of width $\Delta t=.001$, and initialize the solution at $x=-1$ and $y=2.80729 \times 10^{-13}$. We note that changing the initial $y$ value, corresponding to changing the starting time to some $t_0=-20,-15,-10,-5$ only shortens or extends the time for a realization to tip \cite{ritchie_early-warning_2016}. To create the discretized Markov process, recursively define $x$ as
\begin{equation}
\begin{aligned}
x_{n+1}&=x_n+((x_n+y_n)^2-1) \Delta t +\sigma_1 \Delta W_n
\end{aligned}  
\label{EQ:EM}
\end{equation}
A standard Weiner process, $W$, satisfies the property that Brownian increments are independent and normally distributed with mean zero and variance $\Delta t$. Therefore it follows that $\Delta W=W_n-W_{n-1}$ can be numerically simulated using $\sqrt{\Delta t} \cdot N(0,1)$. This can be shown by manipulating the probability density function of $N(0,\Delta t)$.

We simulate $N$ realizations of \eqref{EQ:MCs} using the Euler-Maruyama method given in \eqref{EQ:EM}. We map these realizations to two dimensional phase space by plotting $(y(t),x)$. We define tipping to be when a realization of \eqref{EQ:MCs} crosses $W^s(-2,3)$, and $\lim_{t \rightarrow \infty} \neq -4$. $W^s((-2,3)$ can be see in Figure \ref{FIG:manifolds}.

Of the $N$ realizations, we define $M$ to be the number of realizations that tip on the finite time interval of our choosing. Thus $N-M$ do not tip, an example of which is shown in Figure \ref{FIG:MCs}a for $r=1, \sigma_1=0.15$. There are $M$ points within the $M$ realizations that tipped for every discretized time. We use Python to get the kernel smoothing density estimation of the $M$ points at each time. This finds the `most probable point' at every time step, which is determined by the peak of the kernel density estimation. This peak corresponds to the mode of the $M$ points at that time. Plotting the mode at each time step, we have an approximation for the most probable path. Overlaying the numerically simulated most probable path, with what we found using the the projection of the trajectory through intersection of $W^u(s_1)$ and $W^s(s_2)$ in $y=-x$ in Section 5.1, we see that the approximation matches the actual path extremely well, an example of which is shown in Figure \ref{FIG:MCs}b for $r=1, \sigma_1=0.15$. Therefore we can say the trajectories that tipped followed the heteroclinic connection, and that the heteroclinic connection between the two saddles is the most probable path.

\begin{figure}[ht]
\centering
    \includegraphics[scale=.23]{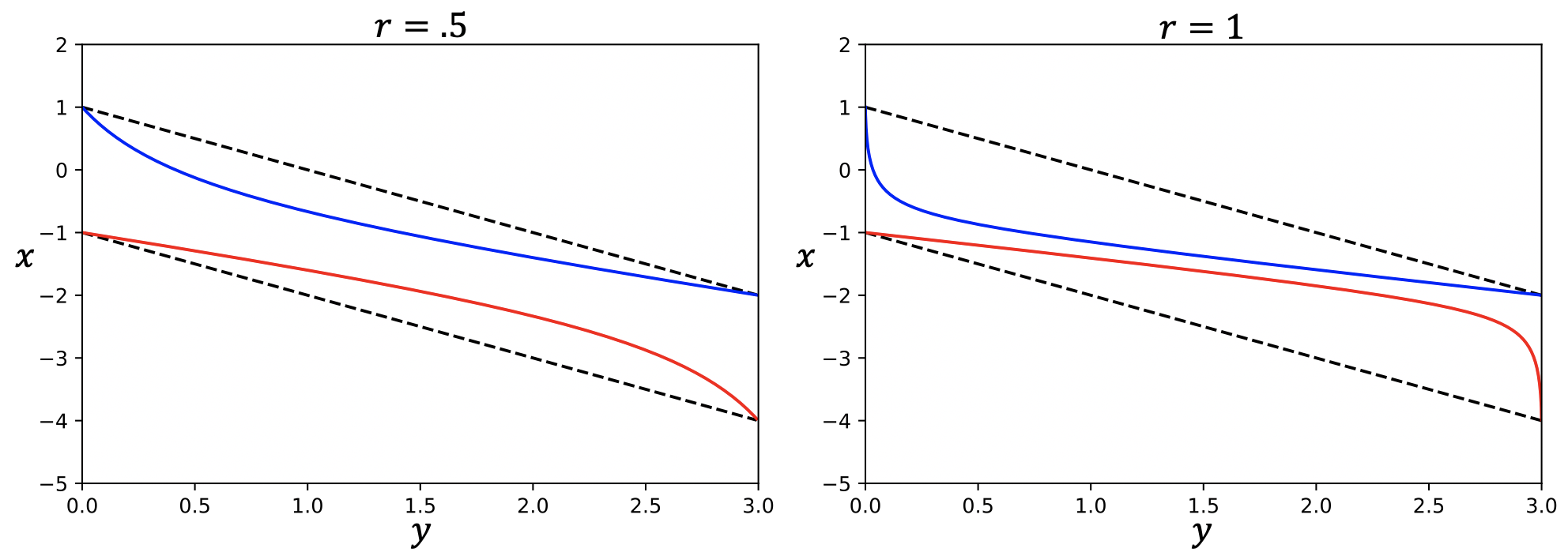}
    \caption{$W^u(-1,0)$ (red) and $W^s(-2,3)$ (blue) for $r=.5$ and $r=1$. Tipping occurs when a realization of \eqref{EQ:EM} crosses $W^s(-2,3)$ and $\lim_{t \rightarrow \infty} \neq -4$.}
    \label{FIG:manifolds}
\end{figure}

\begin{figure*}
  \centering
  \subfloat[]{\includegraphics[width=0.3\textwidth]{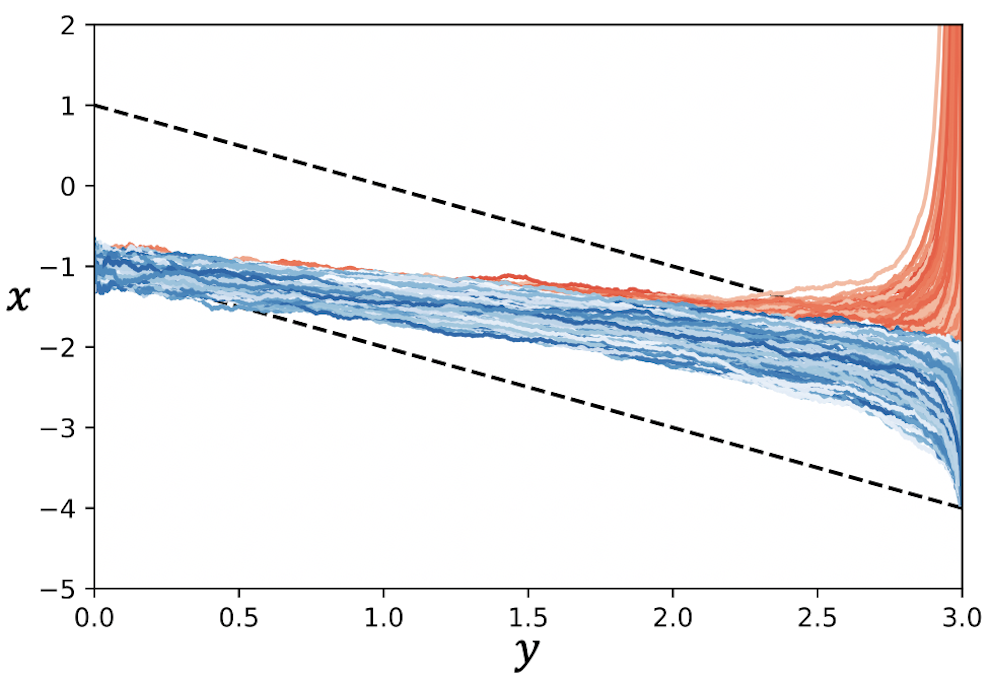}}
  \hspace{5mm}
  \subfloat[]{\includegraphics[width=0.3\textwidth]{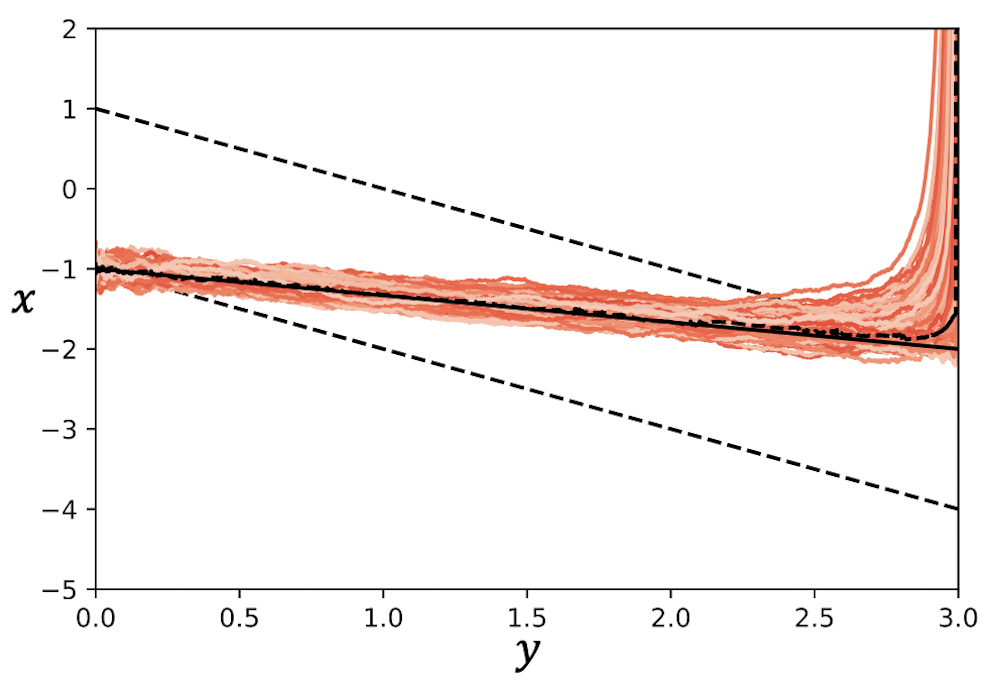}}
   \hspace{5mm}
  \subfloat[]{\includegraphics[width=0.3\textwidth]{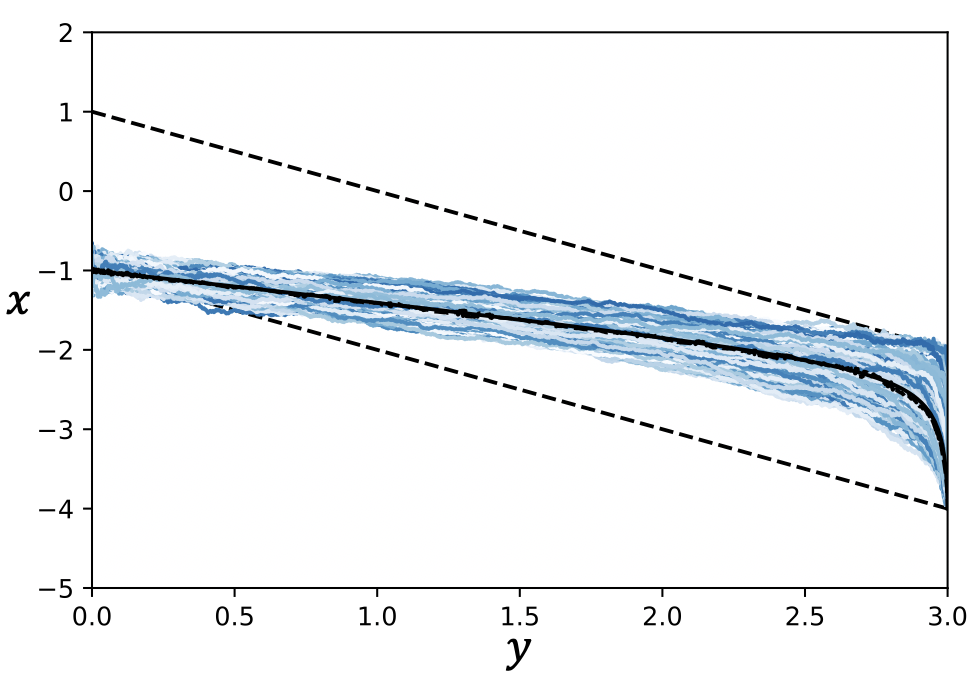}}
    \caption{a) 3000 Monte Carlo simulations of \eqref{EQ:MCs} with $r=1$ and $\sigma=.15$ on the interval $[0,30]$ with $dt=.001$. 2807 realizations do not tip (blue) and 193 tip (red). b) The realizations that tipped, overlaid with the heteroclinic orbit found (solid black) and a kernel density estimate of the realizations that tipped (dashed black). c) The realizations that did not tip, overlaid with the pullback attractor of $(-1,0)$ (solid black) and the kernel density estimate (dashed black). }
    \label{FIG:MCs}
\end{figure*}

As mentioned above, we see via the Monte Carlo simulations that trajectories either tip to infinity or end-point track the stable path to $(-4,3)$ on the given finite time horizon. The trajectories that end-point track the stable path follow the pullback attractor \cite{ashwin_parameter_2017} of $(-1,0)$. Performing another kernel smoothing density estimation on the realizations that did not tip, we see these trajectories actually peak along this pullback attractor, an example of which is shown in Figure \ref{FIG:MCs}c for $r=1, \sigma_1=0.15$.

The heteroclinic orbit and the pullback attractor are objects that can be used to separate trajectories of the system. These computations show that the addition of noise allows the system to tip when its deterministic equivalent would not tip, as the trajectory would be the pullback attractor. For the specific parameter regime $r=1,\sigma=.15$ as depicted above, even with $r$ being 3/4 of the critical rate, we are able to get tipping within the system.

\subsection{Time to Tip}


There is concern that the influence of noise on \eqref{EQ:xdotOG} is the sole reason the system exhibits tipping. However, we verify in this section that the tipping occurs due to the interplay of both the ramp parameter and noise. Additionally, the frequency of tipping largely increases with this addition of small noise strengths interacting with the ramp parameter.

Recall the original goal is to tip from $(-1,0)$ to $(-2,3)$, which correspond to $s_1$ and $s_2$ in the three-dimensional system in \eqref{EQ:MPPequations}. Using \eqref{EQ:MPPequations}, based on the stable and unstable directions of these saddles, and the direction of the vector field, we initially had two possible ways to tip due to the Hamiltonian structure in the invariant planes $y=0$ and $y=3$. We proved in Section \ref{Quadratic} that we had a third way of tipping via a heteroclinic orbit between the two saddles, which we now know is the most probable path from the Monte Carlo simulations. Refer to Figure \ref{fig:waystotip} for a visual of these possible tipping paths.

\begin{figure}
  \centering
  \subfloat[]{\includegraphics[width=.25\textwidth]{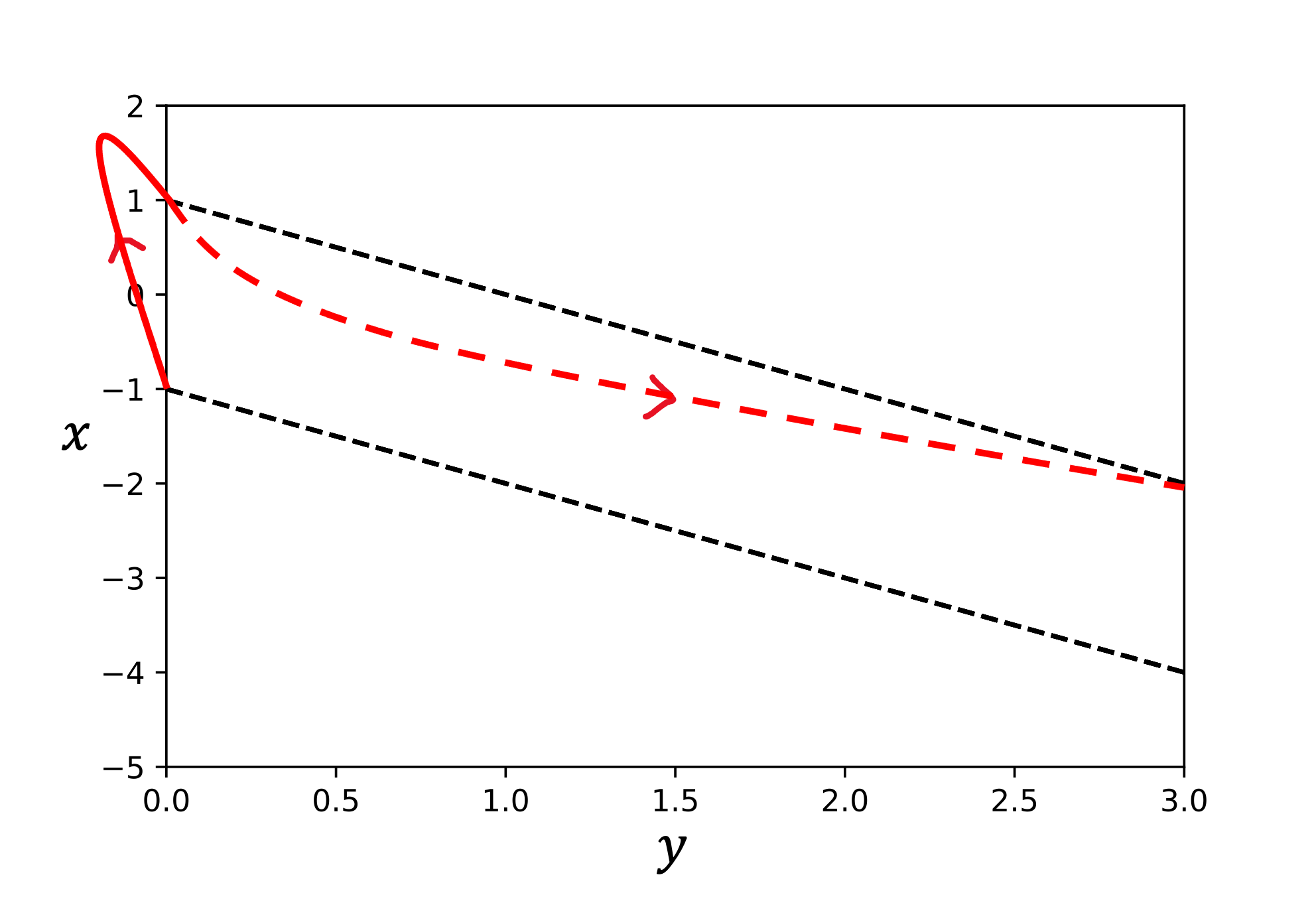}}
  \hspace{5mm}
  \subfloat[]{\includegraphics[width=.25\textwidth]{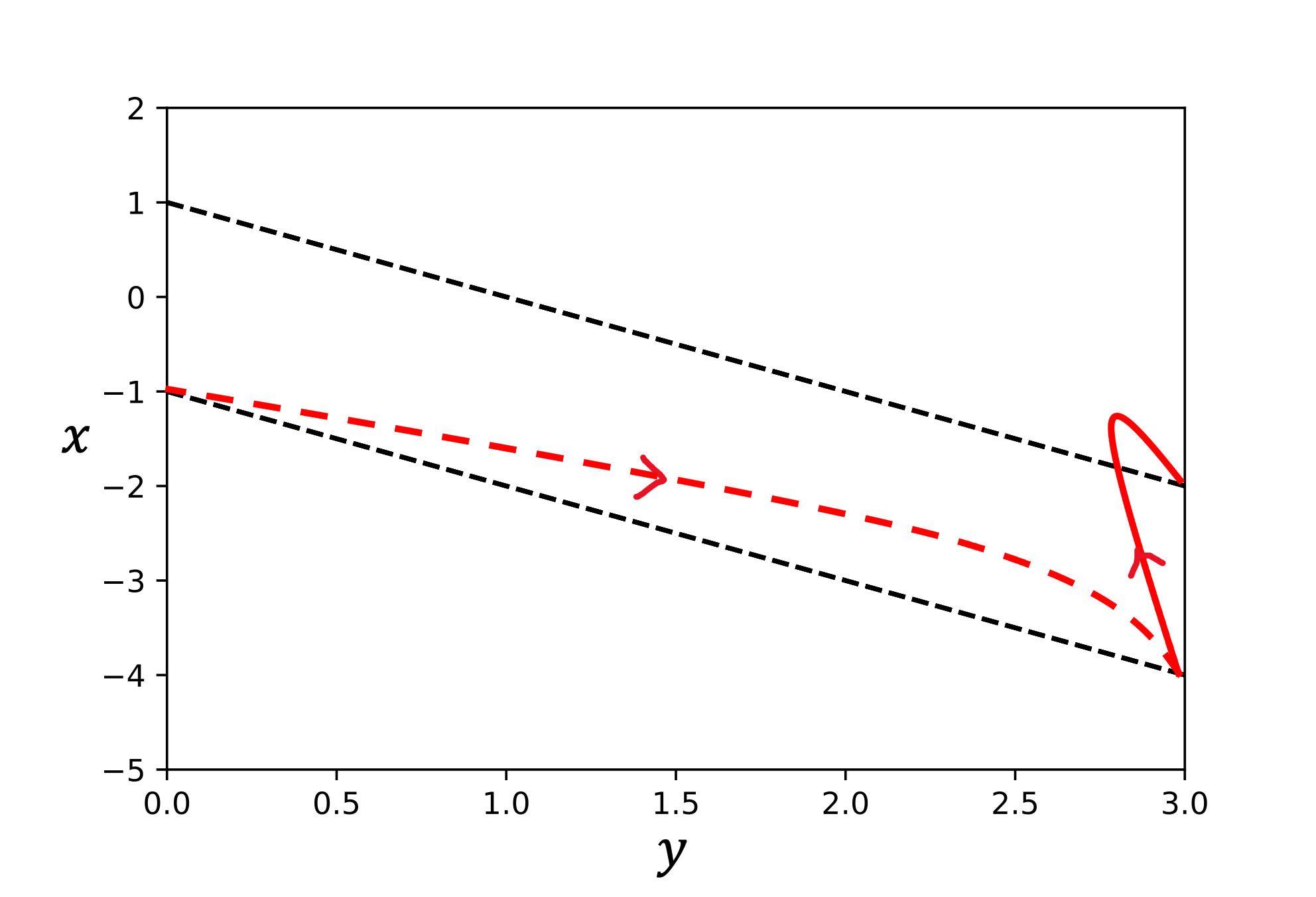}}
   \hspace{5mm}
  \subfloat[]{\includegraphics[width=.25\textwidth]{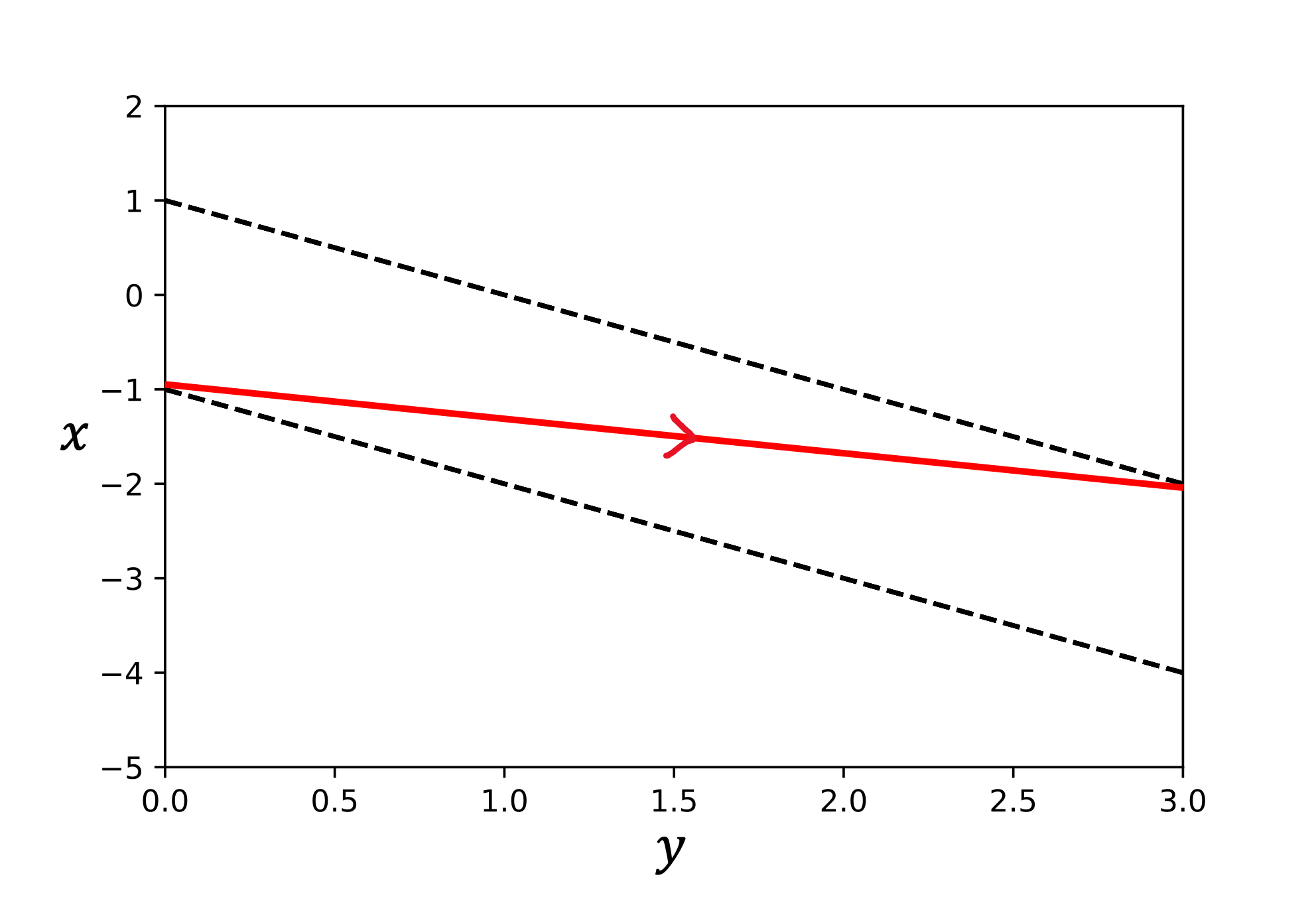}}
     \caption{The three possible ways to tip from $(-1,0)$ to $(-2,3)$. a) Case 1. b) Case 2. c) Case 3. In Cases 1 and 2, the end-point tracking curves (dashed red) will be dependent on $W^u(-1,0)$ and $W^s(-2,3)$.}
    \label{fig:waystotip}
\end{figure}

Notice that if tipping occurs in either $y=0$ or $y=3$, there is no interplay with the ramp parameter, as it would be before or after the ramping occurs. In planes $y=0,3$, we have a one-dimensional stochastic differential equation. We can find the approximate the expected time to tip as we have asymptotic formulas for gradient systems \cite{berglund_kramers_2013}, given by
\begin{equation}
\mathbb{E}[\tau] \approx e^{\frac{2 \Delta V}{\sigma^2}}.
\end{equation}
We calculate the expected time to tip for each tipping path. In our numerical analysis, $.08 \leq \sigma_1 \leq .3$ for all experiments, as we consider a small noise regime. \\ \\
Case 1. Assume we tip from $s_1$ to $(1,0,0)$ in $y=0$ and end-point track from $(1,0,0)$ to $s_2$ along $W^s(-2,3)$. We can find the expected time to tip between the fixed points in $y=0$ as the system is a gradient system in this plane. The associated form of the gradient system and potential function, $V$, is
\begin{equation}
\begin{aligned}
dx &=(x^2-1)dt + \sigma dW \\
&= -\nabla V dt + \sigma dW\\
&= -\nabla (\frac{1}{3}x^3-x)dt +\sigma dW.
\end{aligned}
\label{EQ:pot1}
\end{equation}
The extrema of $V$ correspond to the fixed points of the problem when $y=0: x=-1,1$. Solving for the expected time to tip, we find that without the ramp parameter,
\begin{equation}
\displaystyle \mathbb{E}[\tau] \approx e^{\frac{8}{3\sigma_1^2}} > 10^{12}.
\end{equation}
Therefore, the time to tip from $s_1$ to $s_2$ along this path will be some time greater than $10^{12}$. See a depiction of Case 1 in Figure \ref{fig:waystotip}a.\\ \\
Case 2. Similarly, assume we end-point track the path from $s_1$ to $(-4,3,0)$ along $W^u(-1,0)$ and then tip from $(-4,3,0)$ to $s_2$ in $y=3$. We first find the expected time to tip in $y=3$. Again, we have a gradient system and can rewrite the system in terms of the potential function $V$, written as 
\begin{equation}
\begin{aligned}
dx &=((x+3)^2-1)dt + \sigma dW \\
&= -\nabla(8 x + 3 x^2 + \frac{x^3}{3})dt +\sigma dW.
\end{aligned}
\label{EQ:pot2}
\end{equation}
Solving for the expected time to tip, we find that without the ramp parameter,
\begin{equation}
\displaystyle \mathbb{E}[\tau] \approx e^{\frac{8}{3\sigma_1^2}} > 10^{12}.
\end{equation}
The time to tip from $s_1$ to $s_2$ along this path will also be some time greater than $10^{12}$. See a depiction of Case 2 in Figure \ref{fig:waystotip}b. \\ 
\\
Case 3. To determine the expected time to tip of the most probable path found in Section \ref{Quadratic}, we run a sufficient number of Monte-Carlo simulations so that the expected time to tip distribution converges. 
We use the Euler-Maruyama method to simulate $N$ realizations of \eqref{EQ:MCs} on the interval $[0,30]$, initialized at $(-1,2.80729\times 10^{-13})$, with a step size of $dt=.001$. We want to find the realizations that have tipped to infinity, and capture when the mapped versions, $(y(t),x)$, have crossed $W^s(-2,3)$.

Let $\tau_i$ denote the first time a path, $X_i$ of the form $(y(t),x)$, crosses $W^s(-2,3)$. We define escape events to be the paths $X_i$ that have $\tau_i \leq 30$ and component $x\rightarrow \infty$. Assume for $N$ realizations there are $K$ escape events. We construct the distribution for the $K$ crossing times of $W^s(-2,3)$. To verify we have a converged result for the distribution of the time of escape events, we use the following process. 
\vspace{-2mm}
\begin{enumerate}
\item Bin the crossing times of the $K$ escape events by the Freedman Diaconis \cite{freedman_histogram_1981} rule. This separates the $K$ escape events into $B$ bins of equal length. \vspace{-2mm}
\item Run another $N$ realizations of \eqref{EQ:MCs} on the same time interval and with the same step size. Assume there are $J$ escape events. We bin the $J$ escape events by the same number of bins $B$ found in Step 1. \vspace{-2mm}
\item There are two vectors $D_1,D_2$ of the same length, where each component of the vector represents the amount of paths that tipped in that time interval. Calculate $Err=\frac{||D_1-D_2||_2}{||D_1||_2}$, which is the relative error between the two data sets. 
\vspace{-2mm}
\item If $Err<.1$, we say we have found the converged distribution. However, if $Err \geq .1$, we iterate this process with larger $N$ until the relative error of $D_1$ and $D_2$ is small enough. 
In addition, we use the Kolmogorov-Smirnov Two Sample Test \cite{noauthor_kolmogorovsmirnov_2008} as a final verification that we have a converged distribution.
\end{enumerate}

We conduct this experiment for different values of $r,\sigma$ pairs. In Table \ref{rsigtable}, we see ranges of some of the expected times to tip. Notice unlike Cases 1 and 2, the expected time to tip is now finite. The different times to tip between $s_1$ and $s_2$, depending on which path taken, demonstrates that tipping without the ramp is extremely rare to the point of almost never tipping. In addition, if we just had a ramp parameter and no stochastic component, there is no tipping for when $r<4/3$. Thus, there is an interplay of additive noise and a ramp parameter, and together they facilitate tipping on a finite timescale.

\begin{table}[ht]
\caption{\label{rsigtable}Monte Carlo simulation results for the expected time to tip for $r=.75,.85,1,1.1$ for different values of $\sigma_1$. These times come from converged results of the Monte Carlo simulations using the method described above.}
\begin{ruledtabular}
\begin{tabular}{ccc}
$r$ & $\sigma_1$ range & MC time to tip \\ \hline
.75 & $.15-.3$        &   $\sim 13-14 $             \\
.85 & .$1-.3 $    &  $ \sim 11.5-12.5  $           \\
1 & $.08-.25$         &   $\sim 9.7-10.5$             \\
1.1 & $.08-.25$       &   $\sim 8.6-9.5  $   
\end{tabular}
\end{ruledtabular}
\end{table}

\subsection{Path Actions}
In addition to using Monte Carlo simulations to see how realizations behave and to calculate the expected time to tip, we can compute the action along the different path options. The most probable path should be the path of least action. 
\
Due to the choice for the $p$ variable in the Legendre transform, for a fixed $r$, the variation of $\sigma_1$ results in a scaling in $p$. Therefore we want to consider the normalized action when calculating the path actions. The normalized action is given by:
\begin{equation}
    I[x]=\int_{t_0}^{t_f}(\dot{x}-f)^2 dt=\int_{t_0}^{t_f} \sigma_1^4 p^2 dt,
    \label{EQ:normaction}
    \end{equation}
Using \eqref{EQ:normaction} we find the heteroclinic constructed in Section 4, Case 3, has the least action compared to the other two paths of tipping, Cases 1 and 2. We see that if we tip before the ramp starts or after the ramp finishes, the action value is 5.333. However, tipping along the most probable path gives the least action value, by multiple orders of magnitude. Refer to Table \ref{actiontable} for the comparison of the action size for each of Cases $1-3$ for different $r$ values.

\begin{table}[ht]
\caption{\label{actiontable} Action values for Cases 1-3. Cases 1 and 2 do not depend on $r$ as they tip either before or after the ramp. For Case 3, which is dependent on $r$, we see that for different $r$ values the action is much less than the action of the other paths to tip.}
\begin{ruledtabular}
\begin{tabular}{ccc}
Case \#     & r value  & Action \\ \hline
1 & -   & 5.333  \\ 
2 & -   & 5.333  \\ 
3 & 1.1 & .023   \\
3       & 1   & .054   \\
3       & .75 & .226   \\
3       & .5  & .684 
\end{tabular}
\end{ruledtabular}
\end{table}

\section{Discussion and Conclusions}

\subsection{Scaling Law for the Expected Time to Tip}
For vanishingly small noise, Freidlin-Wentzell theory of large deviations, which gives the probability of a specific trajectory in a stochastic dynamical system, aids in finding the most probable path between two points. This is obtained by minimizing the Freidlin-Wentzell action functional. Additionaly, Freidlin-Wentzell theory gives the expected time to tip \cite{freidlin_random_2012}. 
We saw in this work that Freidlin-Wentzell theory holds in regard to the dynamical structure of the most probable path for small noise strengths. It is still an open question if the expected time to tip aligns between the vanishingly small noise case and the small noise case.

We discovered a power scaling law for the expected time to tip via Monte Carlo simulations, $\tau$, and $1/\sigma_1^2$, for set $r$ and varying values of $\sigma_1$. The log-log plot of these coordinate pairs result in a linear relationship, examples of which are shown in Figure \ref{FIG:loglog}. This linear relationship in log-log space corresponds to a power law of the form $a (\frac{1}{\sigma_1^2})^b$ between $1/\sigma_1^2$ and the expected time to tip. While the scaling laws in Figure \ref{FIG:loglog} are for $r=1$ and $r=.75$, the linear relationship in log-log space held true for multiple $r$ values we studied. An interesting observation is the slope of the line in log-log space for $r=1$ in Figure \ref{FIG:loglog}a is the same as the $\frac{1}{2}(\text{Action Value } r=1)$, the value of which can be seen in Table \ref{actiontable}.

This scaling law is different from the asymptotic formula given by Freidlin-Wentzell theory. However, there are various explanations for this mismatch. The most likely is that we are considering a small noise regime, and not $\sigma_1 \rightarrow 0$, and so it is not necessarily surprising the known scaling law does not hold. Alternatively, we have yet to find the leading coefficient, $c$, which could be dependent on $r$. We hypothesize that you can find the leading coefficient, by finding more expected times to tip and switching perspectives to that of inverse problems. 

We believe this task needs to implement importance sampling \cite{yu_importance_2019} to aid in speeding up the time required to gather the converged data sets. Importance sampling is commonly used to speed up Monte Carlo simulations of rare events by biasing realizations to those rare events \cite{forgoston_primer_2018}.

We would like to point out that \citet{ritchie_early-warning_2016} found that for rate values between $r=1.05$ and $r=1.25$ that as the noise is decreased, the time to tip increases slowly. They find a similar relationship for the delay in
the rate-induced tipping as that of \citet{bakhtin_gumbel_2013} for rare escapes of an autonomous system. 

\subsection{Final Conclusions}
Using compactification \cite{wieczorek_compactification_2021} with a  coordinate transformation of the ramp parameter \eqref{EQ:ramp} allows us to frame the canonical problem as a two-dimensional autonomous system with fixed points and invariant objects, as well as study the heteroclinic connection. We have shown the addition of additive noise causes the system to tip well below the critical rate needed for rate-induced tipping to occur. The system will always have a heteroclinic connection directly between the two saddle equilibria for all $r \leq r_c$. Moreover, the heteroclinic orbit found using the intersection of invariant manifolds matches the kernel density estimate of the noisy realizations found by Monte Carlo simulations, corroborating it as the most probable path of tipping between these two points. Calculating the action over all the possible paths between the two saddles, we find that the heteroclinic connection we constructed has the least action by multiple orders of magnitude, verifying we have truly found the most probable path between these two points. Additionally, we find that rate and noise-induced tipping conspire to facilitate tipping with increased probability, when neither tip on their own when considering a finite time horizon and $r <r_c$.

\begin{figure}
  \centering
  \subfloat[]{\includegraphics[width=.35\textwidth]{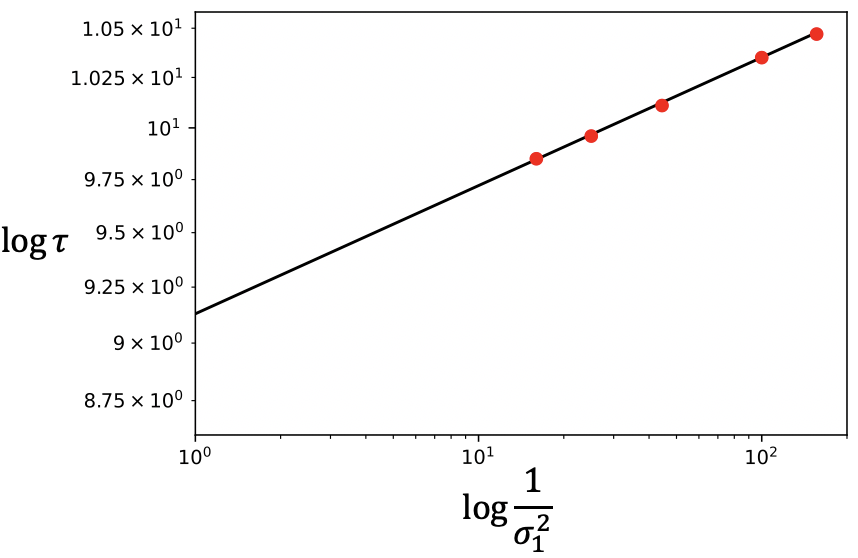}}
  \hspace{5mm}
  \subfloat[]{\includegraphics[width=.35\textwidth]{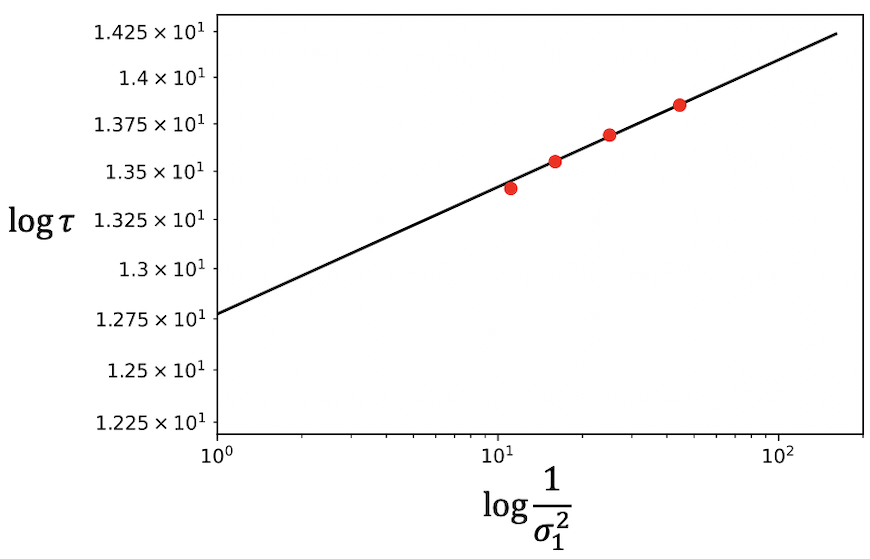}}
\caption{a) Log-log plot of $\tau$ vs. $\frac{1}{\sigma_1^2}$ for $r=1, \sigma_1=.25,.2,.15,.1,.08$. The linear relationship in log-log space corresponds to a power law of the form $9.14 (\frac{1}{\sigma_1^2})^{.027}$. \ b) Log-log plot of $\tau$ vs. $\frac{1}{\sigma_1^2}$ for $r=.75, \sigma_1=.3,.25,.2,.15$. The linear relationship in log-log space corresponds to a power law of the form $12.42 (\frac{1}{\sigma_1^2})^{0.021}$.}
    \label{FIG:loglog}
\end{figure}

We have pushed on the levels of noise to a size where Freidlin-Wentzell theory may no longer hold as the noise strength was not vanishingly small. However, we find that the Freidlin-Wentzell theory actually is still relevant in the extent of the most probable path.

This paper has considered a one-dimensional canonical problem, but we believe this work can extend to understanding tipping between a base state and threshold state of similar forms. Our method made use of the symmetry within the system. If that symmetry does not exist, other implementations of the Wazewski principle will need to be used to prove an intersection of the invariant manifolds exist. Thus, an extension to this case is still required. 

\section{Acknowledgements}
Both investigators were supported by the Office of Naval Research under grant number N000141812204.

\nocite{*}
\bibliographystyle{apsrev4-1}
\bibliography{BIBBIB}

\end{document}